\theoremstyle{plain}
\newtheorem{thm}{Theorem}[section]
\newtheorem{lem}[thm]{Lemma}
\newtheorem{cor}[thm]{Corollary}
\newtheorem{prop}[thm]{Proposition}
\theoremstyle{definition}
\newtheorem{prob}[thm]{Problem}
\newtheorem{exa}[thm]{Example}
\theoremstyle{remark}
\newtheorem{rem}[thm]{Remark}
\newcommand{\A}{A}
\newcommand{\B}{B}
\newcommand{\C}{C}
\newcommand{\D}{D}
\newcommand{\F}{F}
\newcommand{\G}{G}
\renewcommand{\H}{H}
\newcommand{\I}{I}
\newcommand{\J}{J}
\newcommand{\K}{K}
\renewcommand{\L}{L}
\newcommand{\M}{M}
\newcommand{\R}{R}
\renewcommand{\S}{S}
\newcommand{\T}{T}
\newcommand{\ZZ}{Z}
\newcommand{\N}{{\mathbb{N}}}
\newcommand{\Z}{{\mathbb{Z}}}
\newcommand{\vV}{{\mathcal{V}}}
\newcommand{\m}{\mathrm{cube}}
\newcommand{\setsuchthat}{\: : \: }
\newcommand{\leqsd}{\leq_{\textup{sd}}}
\newcommand{\join}{\vee}
\date{\today}
\thanks{The first author was supported by the National Science Foundation under Grant No. DMS 1500254.}
\keywords{finitely generated, subdirect product, fiber product, pullback, finitely presented, congruence permutable}
\subjclass[2010]{Primary: 08B26; Secondary 08B10, 16S15, 20F05}
\begin{document}


\title{Generating subdirect products}

\author{Peter Mayr}
\address[Peter Mayr]{Department of Mathematics, CU Boulder, USA}
\email{peter.mayr@colorado.edu}
\author{Nik Ru\v{s}kuc}
\address[Nik Ru{\v{s}}kuc]{School of Mathematics and Statistics, University of St Andrews, St Andrews, Scotland, UK}
\email{nik.ruskuc@st-andrews.ac.uk}

\begin{abstract}
We study conditions under which subdirect products of various types of algebraic structures are finitely generated or finitely presented.
In the case of two factors, we prove general results for arbitrary congruence permutable varieties, which generalise previously known results for groups, and which apply to modules, rings, $K$-algebras and loops.
For instance, if $C$ is a fiber product of $A$ and $B$ over a common quotient $D$, and if $A$, $B$ and $D$ are finitely presented, then $C$ is finitely generated.
For subdirect products of more than two factors 
we establish a general connection with projections on pairs of factors and higher commutators.
More detailed results are provided for groups, loops, rings and $K$-algebras.
In particular, let $C$ be a subdirect product of $K$-algebras $A_1,\dots,A_n$ for a Noetherian ring $K$ such that
the projection of $C$ onto any $A_i\times A_j$ has finite co-rank in $A_i\times A_j$.
Then $C$ is finitely generated (resp. finitely presented) if and only if all $A_i$ are finitely generated
(resp. finitely presented).
Finally, examples of semigroups and lattices are provided which indicate further complications as one ventures beyond congruence permutable varieties.
\end{abstract}

\maketitle

\section{Introduction}   \label{sec:intro}

 A \emph{subdirect product} of algebraic structures $\A$ and $\B$ is a subalgebra $\C$ of the direct product
 $\A\times\B$ such that the projections 
of $\A\times \B$ onto $\A$ and $\B$ both remain surjective when restricted to $\C$.
 Then we write $\C\leqsd\A\times\B$.
 Although the notion of subdirect product is superficially similar to that of direct product, there are significant
 differences between the two. First note that a subdirect product is not uniquely determined by its factors.
 For example, for any  $\A$ both the direct product $\A\times\A$ and the diagonal subgroup given by
 $\{(a,a)\setsuchthat a\in A \}$ are subdirect products.
 Conceptually, the subdirect product is more akin to a \emph{property} than to an actual \emph{construction}.
 One way to \emph{construct} subdirect products are \emph{fiber products} or \emph{pullbacks}
 (see Section~\ref{sec:prelim}).

 In~\cite{MR:FPD} we investigated how properties such as being finitely generated, finitely presented, residually finite
 behave under direct products.
 There we have seen that the properties of $\A\times\B$ closely follow those of the factors $\A$ and $\B$, and
 that it is relatively hard to find counter-examples to the `expected' preservation result:
\begin{equation} \label{eq:direct}
 \A\times\B \text{ has property } \mathcal{P} \text{ if and only if } \A \text{ and } \B \text{ have property } \mathcal{P}.
\end{equation}
In fact, when $\mathcal{P}$ is finite generation, \eqref{eq:direct} holds in all congruence permutable varieties and all varieties with idempotent operations 
\cite[Theorems 2.2, 2.5]{MR:FPD}, although not for semigroups \cite{RRW:GRDP}.
When $\mathcal{P}$ is finite presentability, however, the situation is more complicated.
There exist
direct products of finitely presented loops or lattices
 that are finitely generated but not finitely presented  \cite[Theorems 3.8, 3.10]{MR:FPD}.

 The freedom  provided by subdirect products further complicates matters.
 For instance it is well known that there
 are subdirect products of two free groups of rank $2$ that are:
\begin{itemize}[topsep=0mm,partopsep=0mm,itemsep=1mm,leftmargin=7mm]
\item
not finitely generated \cite[Example 3]{BM:SFP};
\item
finitely generated but not finitely presented \cite{Gr:SGWC};
\item
 finitely generated but with an undecidable membership problem 
  \cite{Mi:OPDP}.
\end{itemize}

 This suggests that for subdirect products we are unlikely to find general results along the lines of~\eqref{eq:direct},
 and that a systematic study of combinatorial properties of subdirect products will
 be much more complicated than is the case of direct products. 
Following some early work such as \cite{BBMS:FPNPC}, Bridson and Miller explicitly
 initiate such a systematic study for groups in \cite{BM:SFP}, and a number of papers by various authors follow; see \cite{BHM:FPSP} for a major recent article.
The majority of results arising
 from this program of research have a specific group theoretic flavour, to do with the geometrical or topological
 machinery.
 However, they are all built on 
 a small number of results which appear to have a more general algebraic flavour, and potential to be valid in other (different or more general) settings.

Taking this observation as our point of departure, 
in the current paper we start an investigation of finite generation of subdirect products in different classes of algebraic structures, and at different levels of generality.
For those classical algebras (modules, rings) for which finite presentability is preserved under
 direct products, we also study finite presentations of subdirect products.

The paper is structured as follows. In Section \ref{sec:prelim} we review the necessary preliminaries regarding algebras, subdirect products, fiber products, as well as commutators and nilpotence in a general algebraic setting. Then in Section \ref{secFGPK} we prove some general results concerning subdirect products of two factors in congruence permutable varieties. They generalise known results from groups established in \cite{BM:SFP}, and apply to modules, rings, $K$-algebras and loops.
In Section \ref{sec:mult} we move onto considerations of several factors in the general setting, followed by Sections \ref{sec:group} and \ref{sec:Kalgebras} where we establish more detailed results for groups, loops,  modules, rings and $K$-algebras.
Finally, in Section \ref{sec:smgplat} we present some examples of subdirect products of monoids and lattices which show that our general results for congruence permutable varieties do not carry over to these structures.

\section{Preliminaries}  \label{sec:prelim}

\subsection{Algebras}

 We refer to~\cite{BS:CUA} for basic notions of universal algebra.
 For an algebra $\A$ with subalgebra $\B$ we write $\B\leq\A$. For $X\subseteq A$, we let $\langle X\rangle$
 denote the subalgebra of $\A$ generated by $X$. An algebra $\A$ is \emph{finitely generated} if there exists
 finite $X\subseteq A$ with $\A = \langle X\rangle$.

 Recall that a \emph{variety} is a class of algebras of the same type that is defined by equations.
 A variety $\vV$ is \emph{congruence permutable} if for every algebra $\A$ in $\vV$ and all congruences $\alpha,\beta$
 of $\A$ we have that $\alpha\circ\beta = \beta\circ\alpha$. Equivalently, there exists a \emph{Mal'cev term}
 $m$ in the language of $\vV$ such that $m(x,x,y) = m(y,x,x) = y$. Hence algebras in congruence permutable varieties
 are also called \emph{Mal'cev algebras}. Note that groups, rings, modules, $\K$-algebras, loops, etc.,
 are Mal'cev algebras.

 A variety $\vV$ is \emph{congruence distributive} (\emph{congruence modular}) if the lattice of congruences for
 each algebra $\A$ in $\vV$ is \emph{distributive} (\emph{modular}). Lattices form a congruence distributive
 variety. Both congruence permutable and congruence distributive imply congruence modular. In general semigroups
 and monoids have none of these properties.

\subsection{Subdirect and fiber products}

 For $\C\leqsd\A\times\B$ let $\pi_A\colon C\to A$, $\pi_B\colon C\to B$ denote the projections onto $A,B$,
 respectively. The projection kernels $\ker\pi_A$ and $\ker\pi_B$ meet in the trivial congruence $0_\C$ of $\C$.
 Further $\C = \A\times\B$ if and only if the projection kernels permute and their join equals the total congruence $1_\C$.
 See Figure~\ref{fig:consd} for part of the congruence lattice of $\C$.

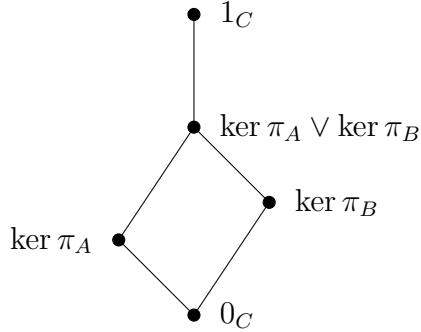
\begin{figure}
\begin{center}
\begin{tikzpicture}
\draw[fill] (2,0) circle[radius=0.8mm]; \node [right] at (2.2,0) {$0_\C$};
\draw (2,0)  -- (1,1);
\draw[fill] (1,1) circle[radius=0.8mm]; \node [left] at (0.8,1) {$\ker\pi_\A$};
\draw (2,0)  -- (3,1.5);
\draw[fill] (3,1.5) circle[radius=0.8mm]; \node [right] at (3.2,1.5) {$\ker\pi_\B$};
\draw (1,1) -- (2,2.5);
\draw (3,1.5) -- (2,2.5);
\draw[fill] (2,2.5) circle[radius=0.8mm]; \node [right] at (2.2,2.5) {$\ker\pi_\A\vee\ker\pi_\B$};
\draw (2,2.5) -- (2,4);
\draw[fill] (2,4) circle[radius=0.8mm]; \node [right] at (2.2,4) {$1_\C$};
\end{tikzpicture}
\caption{Some congruences of $\C\leqsd\A\times\B$.}
\label{fig:consd}
\end{center}
\end{figure}

 Note that $\C/\ker\pi_\A\vee\ker\pi_\B$ is a common homomorphic image of $\A$ and of $\B$.
 By the Homomorphism Theorem the relations
\begin{align*}
&\lambda_\A:=\pi_A(\ker\pi_A\vee\ker\pi_B),\\
&\lambda_\B:=\pi_B(\ker\pi_A\vee\ker\pi_B)
\end{align*}
 are congruences on $\A$ and $\B$, respectively, and 
\[ \A/\lambda_\A\cong\B/\lambda_\B\cong\C/\ker\pi_\A\vee\ker\pi_\B. \]
  We call $\C/\ker\pi_\A\vee\ker\pi_\B$ the \emph{quotient} of the subdirect product, and the congruences 
 $\lambda_\A,\lambda_\B$ on the factors $\A,\B$, respectively, \emph{factor kernels}.

 The above observations lead to the main construction method for subdirect products which we now introduce.
 Let $\A$ and $\B$ be algebras with epimorphisms $g\colon\A\rightarrow \D$ and $h\colon\B\rightarrow \D$
 onto a common homomorphic image $\D$. Then the subalgebra $\C$ of $\A\times\B$ with carrier set
\[ C := \{ (a,b)\in A\times B\setsuchthat g(a)=h(b)\} \]
 is called a \emph{fiber product} (or \emph{pullback}) of $\A$ and $\B$ 
with respect to $g,h$.
 Note that $C$ is a subdirect product with quotient $D$ and factor kernels $\ker f$, $\ker g$.

 As an example, note that every congruence $\alpha$ of an algebra $\A$ is a fiber product $\alpha\leqsd\A\times\A$ with quotient $A/\alpha$ and both factor kernels equal to $\alpha$.

 We record a few straightforward properties of this construction without proof.

\begin{lem} \label{lem:fibered}
 Let $\C$ be a fiber product of $\A$ and $\B$ with respect to $g\colon\A\to\D$, $h\colon\B\to\D$. Then
\begin{enumerate}[label=\textup{(\alph*)}, widest=(b), leftmargin=10mm]
\item $\C\leqsd\A\times\B$;
\item $\lambda_\A = \ker g = \bigl\{ (a_1,a_2)\in A\times A\setsuchthat 
\bigl(\exists b\in B\bigr)\bigl( (a_1,b),(a_2,b)\in C\bigr)\bigr\}$ and
 $\lambda_\B= \ker h = \bigl\{ (b_1,b_2)\in B\times B\setsuchthat 
\bigl(\exists a\in A\bigr)\bigl( (a,b_1),(a,b_2)\in C\bigr)\bigr\}$;
\item \label{it:Clalb}
 $C$ is a union of $\lambda_\A\times\lambda_\B$-classes in $\A\times\B$.
\end{enumerate}
\end{lem}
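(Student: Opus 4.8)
The plan is to verify the three assertions in turn, with essentially all of the content concentrated in part~(b).

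For~(a), I would first check that $\C$ is a subalgebra of $\A\times\B$. For any basic operation $\omega$ and tuples $(a_i,b_i)\in C$, applying $\omega$ coordinatewise gives $\bigl(\omega(a_1,\dots),\omega(b_1,\dots)\bigr)$, and since $g,h$ are homomorphisms and $g(a_i)=h(b_i)$, we get $g(\omega(a_1,\dots))=\omega(g(a_1),\dots)=\omega(h(b_1),\dots)=h(\omega(b_1,\dots))$, so the result again lies in $C$. Surjectivity of the restricted projections then follows from surjectivity of $g$ and $h$: given $a\in A$, pick $b\in B$ with $h(b)=g(a)$ (possible since $h$ is onto); then $(a,b)\in C$ and $\pi_A(a,b)=a$. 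The argument for $\pi_B$ is symmetric, giving $\C\leqsd\A\times\B$.

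For~(b), the key observation is that the projection kernels of a fiber product \emph{permute}, irrespective of the ambient variety. I would compute the composition directly: $\bigl((a_1,b_1),(a_2,b_2)\bigr)\in\ker\pi_A\circ\ker\pi_B$ holds (via the intermediate element $(a_1,b_2)$) precisely when $(a_1,b_2)\in C$, i.e.\ $g(a_1)=h(b_2)$; combined with $g(a_1)=h(b_1)$ and $g(a_2)=h(b_2)$ this forces $g(a_1)=g(a_2)$. The same condition $g(a_1)=g(a_2)$ describes $\ker\pi_B\circ\ker\pi_A$, so the two kernels permute and their join equals this common composition, which is exactly the kernel of the well-defined surjective homomorphism $\C\to\D$, $(a,b)\mapsto g(a)=h(b)$. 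Projecting onto $A$ then yields $\lambda_\A=\pi_A(\ker\pi_A\vee\ker\pi_B)=\{(a_1,a_2)\setsuchthat g(a_1)=g(a_2)\}=\ker g$. To match the explicit set-theoretic description I would again use surjectivity of $h$: if $g(a_1)=g(a_2)$, choosing $b$ with $h(b)=g(a_1)$ gives $(a_1,b),(a_2,b)\in C$; conversely such a common $b$ forces $g(a_1)=h(b)=g(a_2)$. The identity $\lambda_\B=\ker h$ is proved symmetrically.

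For~(c), I would argue directly that $C$ is saturated by $\lambda_\A\times\lambda_\B=\ker g\times\ker h$: if $(a,b)\in C$ and $(a',b')$ satisfies $(a,a')\in\lambda_\A$ and $(b,b')\in\lambda_\B$, then by~(b) we have $g(a')=g(a)=h(b)=h(b')$, whence $(a',b')\in C$. Thus $C$ is a union of $\lambda_\A\times\lambda_\B$-classes.

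The only step that is more than routine bookkeeping is the permutability of the projection kernels in~(b); once it is in hand, the identification of $\lambda_\A$ with $\ker g$, the explicit descriptions, and part~(c) are all immediate. I therefore expect~(b) to be the main—indeed essentially the sole—obstacle, and a rather mild one, since for fiber products the relevant composition can be read off by hand.
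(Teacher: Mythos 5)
Your proof is correct and complete. Note that the paper itself records Lemma~\ref{lem:fibered} explicitly \emph{without proof}, describing its contents as straightforward, so there is no proof in the paper to match; judged on its own terms, your argument verifies (a), (b), (c) with no gaps. It is worth pointing out that your key step in (b) --- the direct computation showing that for a fiber product the relation $\ker\pi_A\circ\ker\pi_B$ is described by the single condition $g(a_1)=g(a_2)$, so that the projection kernels permute --- is exactly the computation the paper does carry out, one lemma later, in proving the forward direction of Lemma~\ref{lem:fleischer} (the generalized Fleischer's Lemma): there too the intermediate element $(a,d)$ plays the role of your $(a_1,b_2)$. So your writeup effectively absorbs that forward direction into the proof of Lemma~\ref{lem:fibered}, which is a perfectly sound organization. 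One small remark: for (b) alone, full permutability is slightly more than you need. It suffices to observe that $\ker\pi_A\vee\ker\pi_B$ equals the kernel of the surjection $\C\to\D$, $(a,b)\mapsto g(a)$; indeed each projection kernel is contained in that kernel, and conversely any pair $\bigl((a_1,b_1),(a_2,b_2)\bigr)$ with $g(a_1)=g(a_2)$ lies in $\ker\pi_A\circ\ker\pi_B$ via $(a_1,b_2)$, hence in the join. Your version proves permutability as a byproduct, which costs nothing and is in fact used elsewhere in the paper. The remaining steps --- surjectivity in (a) via surjectivity of $g$ and $h$, the set-theoretic description of $\ker g$ via a common lift $b$, and the saturation argument in (c) from $\lambda_\A=\ker g$, $\lambda_\B=\ker h$ --- are all correct as written.
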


 A straightforward generalization of Fleischer's Lemma from congruence permutable varieties yields the
 following characterization of those subdirect products that can be obtained from the fiber construction.

\begin{lem}\cite[cf. Fleischer's Lemma, IV.10.1]{BS:CUA} \label{lem:fleischer}
 Let $\C\leqsd\A\times\B$. Then $\C$ is a fiber product if and only if the kernels of the projections $\pi_A$ and $\pi_B$ of $\C$ onto $\A$
 and $\B$ respectively commute.
\end{lem}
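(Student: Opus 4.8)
The plan is to prove both implications by directly manipulating the relational composite of the two projection kernels, exploiting the standard fact that two congruences (indeed, any two equivalence relations) commute if and only if their composite is itself transitive, in which case that composite coincides with their join. I will use throughout the convention $\alpha\circ\beta=\{(x,z)\setsuchthat(\exists y)\,(x,y)\in\alpha,\ (y,z)\in\beta\}$, and I will write points of $\C$ as pairs $(a,b)\in A\times B$.

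For the forward implication I would assume that $\C$ is a fiber product with respect to $g\colon\A\to\D$ and $h\colon\B\to\D$, so that $C=\{(a,b)\in A\times B\setsuchthat g(a)=h(b)\}$. Take a pair in $\ker\pi_A\circ\ker\pi_B$; unravelling the definitions, this amounts to elements $(a_1,b_1),(a_2,b_2)\in C$ whose intermediate witness is $(a_1,b_2)$, which therefore also lies in $C$. I would then read off the defining equalities $g(a_1)=h(b_1)$, $g(a_2)=h(b_2)$ and $g(a_1)=h(b_2)$, and combine them to obtain $g(a_2)=h(b_1)$, which is exactly the condition that $(a_2,b_1)\in C$. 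Since $(a_2,b_1)$ is precisely the intermediate element witnessing membership in $\ker\pi_B\circ\ker\pi_A$, this gives $\ker\pi_A\circ\ker\pi_B\subseteq\ker\pi_B\circ\ker\pi_A$, and the reverse inclusion follows by symmetry; hence the kernels commute.

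For the converse I would assume the kernels commute and construct the fiber structure explicitly. The natural quotient is $\D:=\C/(\ker\pi_A\join\ker\pi_B)$, and using the isomorphisms $\A/\lambda_\A\cong\D\cong\B/\lambda_\B$ recorded before the lemma I would let $g\colon\A\to\D$ and $h\colon\B\to\D$ be the corresponding epimorphisms; concretely, $g(a)$ is the $(\ker\pi_A\join\ker\pi_B)$-class of any $c\in C$ with $\pi_A(c)=a$, which is well defined since all such $c$ lie in one $\ker\pi_A$-class, and symmetrically for $h$. The inclusion $C\subseteq\{(a,b)\setsuchthat g(a)=h(b)\}$ is immediate, as for $c=(a,b)\in C$ both $g(a)$ and $h(b)$ equal the class of $c$.

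The main work, and the step I expect to be the genuine obstacle, is the reverse inclusion, which is exactly where commutativity enters. Given $(a,b)$ with $g(a)=h(b)$, I obtain witnesses $c_1,c_2\in C$ with $\pi_A(c_1)=a$, $\pi_B(c_2)=b$ and $(c_1,c_2)\in\ker\pi_A\join\ker\pi_B$. Because the kernels commute, their join equals the single composite $\ker\pi_A\circ\ker\pi_B$, so there is one intermediate $c'\in C$ with $(c_1,c')\in\ker\pi_A$ and $(c',c_2)\in\ker\pi_B$; comparing coordinates forces $\pi_A(c')=a$ and $\pi_B(c')=b$, that is $c'=(a,b)\in C$, as required. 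I would take care to justify the identity $\ker\pi_A\join\ker\pi_B=\ker\pi_A\circ\ker\pi_B$ for commuting congruences, since collapsing the join into a \emph{single} composition step is the linchpin that lets me extract the element $(a,b)$ of $\C$.
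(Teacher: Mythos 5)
Your proof is correct and takes essentially the same approach as the paper: the forward direction is the paper's argument verbatim (using the fiber condition on the witness $(a_1,b_2)$ to produce the mixed element $(a_2,b_1)$), while for the converse the paper simply cites the proof of Fleischer's Lemma in Burris--Sankappanavar, which is exactly the argument you write out. Your explicit flagging of the identity $\ker\pi_A\join\ker\pi_B=\ker\pi_A\circ\ker\pi_B$ correctly isolates the one place the paper notes permutability is used in that cited proof.
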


\begin{proof}
($\Rightarrow$)
Suppose that $\C$ is the fiber product of $\A$ and $\B$ with respect to
$f:\A\rightarrow\D$, $g:\B\rightarrow\D$, and that
 $(a,b),(c,d)\in C$ are congruent modulo $\ker\pi_A\circ\ker\pi_B$. Then we have $(a,d)\in C$, i.e.,
 $g(a) = h(d)$, so that $g(c) = h(d) = g(a) = h(b)$, which yields $(c,b)\in C$. 
Now
\[ (a,b) \equiv_{\ker\pi_B} (c,b) \equiv_{\ker\pi_A} (c,d) \]
 yields that $(a,b),(c,d)$ are congruent modulo $\ker\pi_B\circ\ker\pi_A$.
 Thus $\ker\pi_A\circ\ker\pi_B\subseteq\ker\pi_B\circ\ker\pi_A$. The converse inclusion follows similarly, and 
 the projection kernels permute.

 ($\Leftarrow$)
The proof is identical to that of \cite[Lemma, IV.10.1]{BS:CUA}, because the only instance where the latter makes use of congruence permutability is to assert that 
the projection kernels commute.
\end{proof}

In particular, in a congruence permutable variety every subdirect product is a fiber product.

\subsection{Multiple factors}

Let $A_1,\dots,A_n$ be algebras of the same type.
For $I\subseteq \{ 1,\dots,n\}$ let
$\pi_I : A_1\times\dots\times A_n\rightarrow \prod_{i\in I} A_i$ denote the natural projection.
We say that a subalgebra $\C\leq \A_1\times\dots\times\A_n$ is a 
\emph{subdirect product} if $\pi_i(C)=A_i$ for all $i\in I$;
we write $\C\leqsd\A_1\times\dots\times\A_n$.
If $\C$ is a subdirect product of $\A_1,\dots,\A_n$ then for each $I\subseteq \{1,\dots,n\}$ the projection $\pi_I(\C)$ is a subdirect product of the $\A_i$ ($i\in I$).
In particular, for any two distinct $i,j\in \{1,\dots,n\}$ we have that $\pi_{ij}(\C)\leqsd \A_i\times \A_j$.

It will transpire in the course of this paper that generally speaking subdirect products of two algebras are more tractable than those with more than two factors.
This is primarily due to the fact that there is no construction playing the role of fiber products in the latter case.
It is therefore natural to attempt to study the general subdirect products by reference to their projections to pairs of components.
In this context we say that $\C$ is \emph{surjective on pairs} if $\pi_{ij}(C)=A_i\times A_j$ for all $i,j\in \{1,\dots,n\}$.
In the more general case where $\pi_{ij}(C)\neq A_i\times A_j$ one considers
$\pi_{ij}(\C)$ as a subdirect product of $\A_i$ and $\A_j$ with the factor kernels given by
$\lambda_{\A_i}=\pi_i (\ker\pi_i\join\ker\pi_j)$
and $\lambda_{\A_j}=\pi_j (\ker\pi_i\join\ker\pi_j)$.
We say that $\C$ is \emph{virtually surjective on pairs} if $\pi_{ij}(C)$ is a subdirect product of $A_i$ and $A_j$
with finite quotient $\A_i/\lambda_{\A_i}\cong\A_j/\lambda_{\A_j}$.
  
 We will also view $\C\leqsd \A_1\times\dots\times\A_n$ as a subdirect product $\C\leqsd \pi_I(\C)\times\pi_J(\C)$,
 where $I$ and $J$ partition $\{1,\dots,n\}$.
In this case the factor kernels are given by
$\pi_I(\ker\pi_I\join\ker\pi_J)$ and
$\pi_J(\ker\pi_I\join\ker\pi_J)$ respectively.

\subsection{Commutators}

 The theory of commutators for normal subgroups of groups has been generalized to congruences of algebras
 in congruence modular varieties (see~\cite{FM:CTC} for binary, \cite{Bu:PCCT,AM:SAHC,Op:RDH,Mo:HCT}
 for higher commutators). We review some notions that will be used later in the paper.

Let $\alpha_1,\dots,\alpha_k$ be congruences of an algebra $\A$ in a congruence modular variety.
 For $i\leq k$ and $a,b\in A$ define
\[ \m^k_i(a,b)\colon \{0,1\}^k\to A,\ x\mapsto 
\left\{\begin{array}{ll} a & \text{if } x_i = 0, \\
          b & \text{if } x_i  = 1. \end{array}\right. \]  
 Let 
\[ M_\A(\alpha_1,\dots,\alpha_k) := \bigl\langle \m^k_i(a,b) \setsuchthat 1\leq i\leq k, (a,b)\in\alpha_i \bigr\rangle \leq\A^{\{0,1\}^k}. \]
Note that $M_\A(\alpha_1,\dots,\alpha_k)$ is a subdirect subalgebra of $\A^{\{0,1\}^k}$.
The $k$-\emph{ary commutator} $[\alpha_1,\dots,\alpha_k]$ is the smallest congruence $\gamma$ of $\A$ such that
 every $f\in M_\A(\alpha_1,\dots,\alpha_k)$ which satisfies 
\[ f(x_1,\dots,x_{k-1},0) \equiv_\gamma f(x_1,\dots,x_{k-1},1)\]
for all $(x_1,\dots,x_{k-1})\in\{0,1\}^{k-1} \setminus\{(1,\dots,1)\}$,
also satisfies
\[ f(1,\dots,1,0) \equiv_\gamma f(1,\dots,1,1). \]
 For $k = 1$ we have $[\alpha_1] = \alpha_1$, for $k=2$ the binary term condition commutator~\cite{FM:CTC},
 and for $k\geq 3$ the higher commutators as introduced by Bulatov~\cite{Bu:PCCT} and developed by
 Aichinger, Mudrinski~\cite{AM:SAHC}, Opr{\v s}al~\cite{Op:RDH}, and Moorhead~\cite{Mo:HCT}.

Commutators for an algebra $\A$ in a congruence modular variety
satisfy the following monotonicity and permutability properties (see \cite{Mo:HCT}):
\begin{enumerate}[label=\textsf{(C\arabic*)}, widest=(C5), leftmargin=10mm]
\item
\label{it:hc1}
$ [\alpha_1,\dots,\alpha_k]  \leq \alpha_1\wedge\dots\wedge\alpha_k$;
\item
\label{it:hc2}
$ [\alpha_1,\dots,\alpha_k]  \leq [\beta_1,\dots,\beta_k] \text{ if } \alpha_1\leq\beta_1,\dots,\alpha_k\leq\beta_k$; 
\item
\label{it:hc3}
 $[\alpha_1,\dots,\alpha_k]  \leq  [\alpha_2,\dots,\alpha_{k}]$; 
\item 
\label{it:hc4}
$ [\alpha_1,\dots,\alpha_k]  = [\alpha_{\sigma(1)},\dots,\alpha_{\sigma(k)}]$ for each  $\sigma\in S_k$;
\item 
\label{it:hc7}
 $\Bigl[\bigvee_{i=1}^\ell\beta_i,\alpha_2,\dots,\alpha_k\Bigr]  = \bigvee_{i=1}^\ell[\beta_i,\alpha_2,\dots,\alpha_k]$;
\item
\label{it:hc8}
$ \bigl[[\alpha_1,\dots,\alpha_k],\beta_1,\dots,\beta_\ell\bigr]  \leq [\alpha_1,\dots,\alpha_k,\beta_1,\dots,\beta_\ell]$.
\end{enumerate}

  For congruences on subdirect products we also note the following:

\begin{lem} \label{le:sdcom}
 Let $\C\leqsd\A\times\B$ with congruences $\gamma_1,\dots,\gamma_k$. Then
\[ [\gamma_1,\dots,\gamma_k] \leqsd \bigl[\pi_\A(\gamma_1),\dots,\pi_\A(\gamma_k)\bigr] \times \bigl[\pi_\B(\gamma_1),\dots,\pi_\B(\gamma_k)\bigr]. \]
\end{lem}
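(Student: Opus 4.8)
The plan is to reduce the subdirect-product claim to an identity saying that projection commutes with the commutator, and then to prove that identity by expanding via multilinearity. Write $\delta:=[\gamma_1,\dots,\gamma_k]$, a congruence of $\C$, and regard $\delta$ as a subalgebra of $\C\times\C$, hence of $(\A\times\A)\times(\B\times\B)$ after reordering coordinates. Its projection onto $\A\times\A$ is $\pi_\A(\delta)$ and onto $\B\times\B$ is $\pi_\B(\delta)$, where for a congruence $\theta$ of $\C$ I write $\pi_\A(\theta)$ for the congruence of $\A\cong\C/\ker\pi_\A$ determined by $\theta\vee\ker\pi_\A$ (in a congruence permutable variety this is just the relational image of $\theta$). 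Since every subalgebra is contained in the product of its two projections, the containment $\delta\leq\pi_\A(\delta)\times\pi_\B(\delta)$ is automatic, and it remains only to identify the projections:
\[ \pi_\A(\delta)=[\pi_\A\gamma_1,\dots,\pi_\A\gamma_k]\quad\text{and}\quad \pi_\B(\delta)=[\pi_\B\gamma_1,\dots,\pi_\B\gamma_k]. \]
Both the surjectivity of the projections and the subdirectness then follow at once.

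I will prove the first equality; the second is symmetric. Set $\kappa:=\ker\pi_\A$. The one external ingredient is the standard fact that higher commutators are preserved by surjective homomorphisms for congruences lying above the kernel \cite{Mo:HCT}: applied to $\pi_\A$ and the congruences $\gamma_i\vee\kappa\geq\kappa$ this gives
\[ [\pi_\A\gamma_1,\dots,\pi_\A\gamma_k]=\pi_\A\bigl([\gamma_1\vee\kappa,\dots,\gamma_k\vee\kappa]\bigr). \]
Now I expand the right-hand commutator by multilinearity \ref{it:hc7} in each coordinate, obtaining a join over all subsets $S\subseteq\{1,\dots,k\}$ of the commutators $[\theta^S_1,\dots,\theta^S_k]$, where $\theta^S_i=\kappa$ for $i\in S$ and $\theta^S_i=\gamma_i$ otherwise.

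The join then collapses. For $S=\emptyset$ the corresponding term is exactly $[\gamma_1,\dots,\gamma_k]=\delta$. For every $S\neq\emptyset$ at least one argument equals $\kappa$, so by \ref{it:hc1} that term lies below $\kappa=\ker\pi_\A$ and is therefore sent to the trivial congruence $0_\A$ by $\pi_\A$. Since the assignment $\theta\mapsto\pi_\A(\theta)$ preserves joins (it is the composite of $\theta\mapsto\theta\vee\kappa$ with the lattice isomorphism $[\kappa,1_\C]\cong\Con\A$), applying $\pi_\A$ to the expanded join leaves only the $S=\emptyset$ term, giving $\pi_\A(\delta)$. Hence $[\pi_\A\gamma_1,\dots,\pi_\A\gamma_k]=\pi_\A(\delta)$, as required, and the symmetric computation yields the statement for $\B$.

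The step I expect to be the crux is making the projections of congruences behave correctly. The delicate points are that $\pi_\A(\gamma_i)$ must be read as a genuine congruence — unproblematic in the congruence permutable setting, where the relational image of a congruence under a surjection is again a congruence, and otherwise handled by passing from $\gamma_i$ to $\gamma_i\vee\kappa$ — and that I am invoking the homomorphism law for commutators, which is where the underlying commutator theory of \cite{Mo:HCT} does the real work. Once these are in place, the multilinearity expansion together with \ref{it:hc1} finishes the argument essentially by bookkeeping.
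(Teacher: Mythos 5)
Your proof is correct in the congruence permutable setting, but it follows a genuinely different route from the paper's. The paper argues in one line directly from the definition: the generating cubes $\m^k_i(a,b)$ of $M_\C(\gamma_1,\dots,\gamma_k)$ project onto the generating cubes of $M_\A\bigl(\pi_\A(\gamma_1),\dots,\pi_\A(\gamma_k)\bigr)$ and $M_\B\bigl(\pi_\B(\gamma_1),\dots,\pi_\B(\gamma_k)\bigr)$, so that $M_\C(\gamma_1,\dots,\gamma_k)\leqsd M_\A\times M_\B$, and the lemma is then read off from the term condition defining the commutator; no commutator calculus is needed. You instead treat the commutator as a black box satisfying \ref{it:hc1}, \ref{it:hc4}, \ref{it:hc7} together with the quotient law $[\alpha_1/\eta,\dots,\alpha_k/\eta]=\bigl([\alpha_1,\dots,\alpha_k]\vee\eta\bigr)/\eta$, and your multilinearity collapse is a clean way to obtain the exact equality of projections that the symbol $\leqsd$ encodes. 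As for what each approach buys: the paper's argument is self-contained and works verbatim with relational images in the ambient congruence modular setting of the subsection; yours imports the homomorphism law, which is established for Mal'cev varieties in \cite{AM:SAHC} --- your attribution of it to \cite{Mo:HCT}, and hence the fully modular case, is shakier, since the paper cites Moorhead only for \ref{it:hc1}--\ref{it:hc8} --- and your identification of the relational image $\pi_\A(\theta)$ with $(\theta\vee\ker\pi_\A)/\ker\pi_\A$ genuinely needs permutability: in a merely modular variety the image can be a non-transitive tolerance strictly below the join-quotient, so the subdirectness assertion (which concerns relational images) would not follow from your equality of congruences. Since every application in the paper (e.g.\ Theorem \ref{thm:mult}) is Mal'cev, these scope caveats, which you flag yourself, do not undermine the argument where it matters.
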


\begin{proof}
Straightforward from
 \[ M_\C(\gamma_1,\dots,\gamma_k) \leqsd M_\A\bigl(\pi_\A(\gamma_1),\dots,\pi_\A(\gamma_k)\bigr) \times M_\B\bigl(\pi_\B(\gamma_1),\dots,\pi_\B(\gamma_k)\bigr). \]
\end{proof}

 We briefly consider two important special cases: groups and rings. 
 For groups, congruences can be identified with normal subgroups, and the binary term condition commutator 
 specializes exactly to the classical binary commutator in groups. By \cite[Lemma 3.6]{Ma:MASC}
 for normal subgroups $N_1,\dots,N_k$
 the $k$-ary commutator is the product over left associated iterated binary commutators for all permutations of
 $N_1,\dots,N_k$,
 \[ [N_1,\dots,N_k] = \prod_{\sigma\in S_k} [\dots[[N_{\sigma(1)},N_{\sigma(2)}],N_{\sigma(3)}],\dots, N_{\sigma(k)}]. \]
For rings (and $\K$-algebras for a commutative ring $\K$ with $1$ -- see Section \ref{sec:Kalgebras} for definition), congruences correspond to ideals.
By \cite[Lemma 3.5]{Ma:MASC} for ideals $I_1,\dots,I_k$
 the $k$-ary commutator is the sum of all products of all permutations of $I_1,\dots,I_k$,
\[ [I_1,\dots,I_k] = \sum_{\sigma\in S_k} I_{\sigma(1)} I_{\sigma(2)} \dots I_{\sigma(k)}. \]

 We note that unlike for groups and rings, for arbitrary Mal'cev algebras $k$-ary commutators cannot be expressed
 as iterated binary commutators, however~\ref{it:hc8} implies
\[ [\dots [[\alpha_1,\alpha_2],\alpha_3],\dots,\alpha_k] \leq [\alpha_1,\alpha_2,\dots,\alpha_k]. \]
  
 This leads to two different notions of nilpotence with the classical one (just called nilpotence) using iterated
 binary commutators to define a central series and the stronger one (supernilpotence) using higher commutators. 
 An algebra $\A$ is \emph{supernilpotent of class} $k$ if the $k+1$-ary commutator of the total congruence $1$
 with itself vanishes but the $k$-ary commutator does not, i.e., $[\underbrace{1,\dots,1}_{k+1}] = 0$ and $[\underbrace{1,\dots,1}_{k}] \neq 0$. Further $\A$ is \emph{virtually supernilpotent of class}
 $k$ if there is a congruence $\alpha$ with $\A/\alpha$ finite such that
 $[\underbrace{\alpha,\dots,\alpha}_{k+1}] = 0, [\underbrace{\alpha,\dots,\alpha}_k] \neq 0$. 
 (Super)nilpotent algebras of class at most $1$ are \emph{abelian}.
Note that for groups and rings nilpotence and supernilpotence coincide, but this is not true in general.

\section{Finite generation for two factors}
\label{secFGPK}

 In \cite{BM:SFP} Bridson and Miller observe the following facts for subdirect products $\C\leqsd\A\times\B$
 of two groups $\A,\B$ 
 with $L_\A := \pi_\A(\C \cap \A\times 1)\unlhd \A$, $L_\B := \pi_\B(\C \cap 1\times\B)\unlhd \B$ 
 the normal subgroups corresponding to the factor kernels: 

\begin{enumerate}[label=\textsf{(G\arabic*)}, widest=(G4), leftmargin=10mm]
\item
\label{SDG1}
 Assume $\C$ is finitely generated. If $\A$ is finitely presented, then $L_\B$ is finitely normally
 generated in $\B$.
The converse holds under the assumption that $\C$ is finitely presented
\cite[Proposition 2.1]{BM:SFP}.
\item
\label{SDG2}
 Assume $\A$ and $\B$ are finitely generated. If $L_\A$ or $L_\B$
 is finitely normally generated, then $\C$ is finitely generated
 \cite[Proposition 2.3]{BM:SFP}.
\item
\label{SDG3}
 Assume $\A$ and $\B$ are finitely presented. Then $\C$ is finitely generated if and only if
 one (and hence both) of $\A/L_\A$ or $\B/L_\B$ is finitely presented
 \cite[Corollary 2.4]{BM:SFP}.
\end{enumerate}

 We 
investigate how far these statements 
generalize to arbitrary algebras. By the following two results, the first part of~\ref{SDG1} holds in complete
 generality, while the second holds for algebras with modular congruence lattices.

\begin{prop} \label{pro:lBfg}
For $\C\leqsd \A\times\B$ the following hold:
\begin{enumerate}[label=\textup{(\alph*)}, widest=(ii), leftmargin=8mm]
\item \label{it:lBfg1}
  If $\ker\pi_\A$ is a finitely generated congruence of $\C$, then $\lambda_\B:=\pi_B(\ker\pi_A\join\ker\pi_B)$
  is a finitely generated congruence of $\B$.
\item \label{it:lBfg2}
 Suppose that the congruence lattice of $\C$ is modular.
 Then $\ker\pi_\A$ is finitely generated if and only if $\lambda_\B$ is finitely generated. 
Moreover, $\ker\pi_\A\join\ker\pi_B$ is finitely generated if and only if both $\ker\pi_A$ and $\ker\pi_B$ are finitely generated.
\end{enumerate}
\end{prop}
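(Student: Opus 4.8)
The plan rests on one standard fact about surjective homomorphisms: if $\phi\colon\C\to\B$ is surjective with kernel $\kappa$, then for every $X\subseteq C\times C$ one has $\phi(\Cg_\C(X)\join\kappa)=\Cg_\B(\phi(X))$. This is immediate from the Correspondence Theorem, which makes $\theta\mapsto\phi(\theta)$ a join-preserving lattice isomorphism from the interval $[\kappa,1_\C]$ onto $\Con(\B)$ sending the least congruence above $\kappa$ through a pair $(c,c')$ to $\Cg_\B(\phi(c),\phi(c'))$; writing $\Cg_\C(X)\join\kappa$ as the join of the $\Cg_\C(c,c')\join\kappa$ over $(c,c')\in X$ yields the identity. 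Applying it with $\phi=\pi_\B$, $\kappa=\ker\pi_\B$, and $X$ a finite generating set $S$ of $\ker\pi_\A$ gives $\lambda_\B=\pi_\B(\ker\pi_\A\join\ker\pi_\B)=\Cg_\B(\pi_\B(S))$, which is finitely generated. This settles \ref{it:lBfg1}.

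For \ref{it:lBfg2}, the forward direction of the first assertion is exactly \ref{it:lBfg1}, so I would assume $\lambda_\B$ is finitely generated and $\Con(\C)$ modular. Taking $X=\ker\pi_\A$ in the fact above gives $\lambda_\B=\Cg_\B(\pi_\B(\ker\pi_\A))=\bigvee_{(c,c')\in\ker\pi_\A}\Cg_\B(\pi_\B(c),\pi_\B(c'))$. Since a finitely generated congruence is a compact element of the algebraic lattice $\Con(\B)$, compactness of $\lambda_\B$ produces a \emph{finite} $T\subseteq\ker\pi_\A$ with $\lambda_\B=\Cg_\B(\pi_\B(T))$. Then $\Cg_\C(T)\join\ker\pi_\B$ and $\ker\pi_\A\join\ker\pi_\B$ both lie in $[\ker\pi_\B,1_\C]$ and have the same $\pi_\B$-image $\lambda_\B$, so by the correspondence they coincide.

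The decisive step is to strip off $\ker\pi_\B$ using modularity together with the fact that the projection kernels meet trivially. Because $T\subseteq\ker\pi_\A$ forces $\Cg_\C(T)\le\ker\pi_\A$, and because $\ker\pi_\A\meet\ker\pi_\B=0_\C$, the modular law yields
\[ \Cg_\C(T)=\Cg_\C(T)\join(\ker\pi_\A\meet\ker\pi_\B)=\ker\pi_\A\meet(\Cg_\C(T)\join\ker\pi_\B)=\ker\pi_\A\meet(\ker\pi_\A\join\ker\pi_\B)=\ker\pi_\A. \]
Hence $\ker\pi_\A=\Cg_\C(T)$ is finitely generated, completing the first assertion of \ref{it:lBfg2}.

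For the \emph{Moreover} part, one implication is trivial in any lattice: if $\ker\pi_\A=\Cg_\C(S_\A)$ and $\ker\pi_\B=\Cg_\C(S_\B)$ with $S_\A,S_\B$ finite, then $\ker\pi_\A\join\ker\pi_\B=\Cg_\C(S_\A\cup S_\B)$. Conversely, if $\ker\pi_\A\join\ker\pi_\B=\Cg_\C(U)$ with $U$ finite, then since this congruence contains $\ker\pi_\B$ the fact above gives $\lambda_\B=\pi_\B(\Cg_\C(U))=\Cg_\B(\pi_\B(U))$ finitely generated, and the first assertion of \ref{it:lBfg2} yields $\ker\pi_\A$ finitely generated; the symmetric argument through $\lambda_\A$ gives $\ker\pi_\B$. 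I expect the modular-law manipulation to be the main obstacle, precisely because it hinges on recognising that $\ker\pi_\A\meet\ker\pi_\B=0_\C$ is what permits recovering $\ker\pi_\A$ from the join; the appeal to compactness of finitely generated congruences is the other essential ingredient, as it is what pushes a finite generating set of $\lambda_\B$ back to one sitting inside $\ker\pi_\A$.
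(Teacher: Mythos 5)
Your proof is correct and follows essentially the same route as the paper's: part \textup{(a)} is the paper's quotienting argument made explicit via the Correspondence Theorem, and in part \textup{(b)} your compactness step plus the modular-law computation $\Cg_\C(T)=\Cg_\C(T)\join(\ker\pi_\A\meet\ker\pi_\B)=\ker\pi_\A\meet\bigl(\Cg_\C(T)\join\ker\pi_\B\bigr)=\ker\pi_\A$ is exactly an inlined proof of the instance of the Isomorphism Theorem for Modular Lattices that the paper cites, namely the isomorphism between the intervals $[\ker\pi_\B,\ker\pi_\A\join\ker\pi_\B]$ and $[0_\C,\ker\pi_\A]$. The \emph{Moreover} part likewise coincides with the paper's argument (reduce to $\lambda_\B$ finitely generated and apply the first assertion, then use symmetry).
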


\begin{proof}
\ref{it:lBfg1}
If $\ker\pi_A$ is finitely generated, then
$\ker\pi_A\join\ker\pi_B$ is finitely generated modulo $\ker\pi_B$, and hence $\lambda_\B$
is finitely generated in $\pi_B(\C)=\B$.

\ref{it:lBfg2}
By the Isomorphism Theorem for Modular Lattices~\cite[Theorem 348]{Gr:LTF},
the intervals $[\ker\pi_B,\ker\pi_A\vee\ker\pi_B]$ and $[0_C, \ker\pi_A]$ in the congruence lattice of $\C$ are isomorphic.
For the first statement, $\lambda_\B$ being finitely generated is equivalent to the congruence $\ker\pi_A\join\ker\pi_B$ being compact in the interval
$[\ker\pi_B,\ker\pi_A\vee\ker\pi_B]$. This in turn is equivalent to $\ker\pi_A$ being compact
in the interval $[0_C, \ker\pi_A]$, i.e., $\ker\pi_A$ being finitely generated.

For the direct implication of the second statement, suppose that $\ker\pi_\A\join\ker\pi_B$
is finitely generated. Then, in particular, $\ker\pi_\A\join\ker\pi_B$ is finitely generated modulo
$\ker\pi_B$, and hence, as above, $\ker\pi_A$ is finitely generated. By symmetry, $\ker\pi_B$ is finitely generated as well. The converse implication is obvious, and the proof is complete.
\end{proof}

\begin{cor} \label{cor:lBfg}
Let $\A,\B$ belong to a congruence modular variety $\vV$, and suppose $\C\leqsd\A\times\B$ is finitely generated.
If $\A$ is finitely presented in $\vV$, then $\lambda_\B$ is a finitely generated congruence of $\B$.
Moreover, if $\C$ is finitely presented and $\lambda_\B$ is finitely generated, then $\A$ is finitely presented.
\end{cor}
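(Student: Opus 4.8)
The plan is to derive both assertions from Proposition~\ref{pro:lBfg}, using the standard correspondence between finite presentability of a quotient $\C/\theta$ and finite generation of the congruence $\theta$. Throughout I use that $\A\cong\C/\ker\pi_\A$, and that since $\vV$ is a variety with $\A,\B\in\vV$, the subalgebra $\C\leq\A\times\B$ also lies in $\vV$.

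For the first statement I would first show that $\ker\pi_\A$ is a finitely generated congruence of $\C$. Choosing a finite generating set for $\C$ yields an epimorphism $\F\to\C$ from a finitely generated free algebra $\F$ of $\vV$, with kernel $\rho$; composing with the quotient map $\C\to\A$ gives an epimorphism $\F\to\A$ whose kernel $\sigma\supseteq\rho$ satisfies $\sigma/\rho=\ker\pi_\A$. Because $\A$ is finitely presented in $\vV$ and $\F$ is finitely generated free, a Tietze-transformation argument shows $\sigma$ is a finitely generated congruence of $\F$; since $\sigma\supseteq\rho$, the images in $\C$ of finitely many generators of $\sigma$ generate $\ker\pi_\A$. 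Proposition~\ref{pro:lBfg}\ref{it:lBfg1} then gives that $\lambda_\B$ is finitely generated. Note this part uses neither modularity nor finite presentability of $\C$.

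For the second statement, congruence modularity of $\vV$ makes the congruence lattice of $\C$ modular, so Proposition~\ref{pro:lBfg}\ref{it:lBfg2} applies: as $\lambda_\B$ is finitely generated, $\ker\pi_\A$ is a finitely generated congruence of $\C$. It remains to transfer finite presentability from $\C$ to $\A=\C/\ker\pi_\A$. Writing $\C\cong\F/\rho$ with $\F$ finitely generated free and $\rho$ finitely generated (this is where finite presentability of $\C$ enters), let $\sigma$ be the preimage of $\ker\pi_\A$, so $\A\cong\F/\sigma$. Then $\sigma$ is generated by $\rho$ together with finitely many pairs lifting the generators of $\ker\pi_\A$, hence is finitely generated, and $\A$ is finitely presented.

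The main obstacle is the standard but technical equivalence underlying both directions, for $\C=\F/\rho$ and a congruence $\theta$ with preimage $\sigma$, between finite presentability of $\C/\theta$ and finite generation of $\sigma$. I expect the \emph{forward} implication used in the first statement---deducing finite generation of $\sigma$ from finite presentability of $\A$ alone---to be the delicate point, since it rests on the invariance of finite presentability under change of finite generating set; the congruence-lattice bookkeeping (that $\theta=\sigma/\rho$ is finitely generated on $\C$ precisely when $\sigma$ is finitely generated on $\F$, given $\rho$ finitely generated) is routine.
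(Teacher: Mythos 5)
Your proposal is correct and takes essentially the same route as the paper's own proof: both assertions are reduced to Proposition~\ref{pro:lBfg}, parts \ref{it:lBfg1} and \ref{it:lBfg2} respectively, combined with the two standard facts that the kernel of any epimorphism from a finitely generated algebra onto a finitely presented algebra is a finitely generated congruence, and that a quotient of a finitely presented algebra by a finitely generated congruence is finitely presented. The paper simply invokes these facts, whereas you spell them out via free algebras and lifting of congruence generators; your side remark that the first statement needs neither modularity nor finite presentability of $\C$ accords with the paper's observation that Proposition~\ref{pro:lBfg}\ref{it:lBfg1} holds in complete generality.
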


\begin{proof}
The first assertion follows from Proposition \ref{pro:lBfg}\ref{it:lBfg1} because $\C$ finitely generated and $\A$ finitely presented imply that $\ker\pi_\A$ is finitely generated.
The second statement follows from Proposition \ref{pro:lBfg}\ref{it:lBfg2}: 
$\lambda_\B$ finitely generated implies $\ker\pi_\A$ finitely generated, which, together with $\C$ finitely presented, implies that $A\cong\C/\ker\pi_\A$ is finitely presented as well.
\end{proof}

 Proposition~\ref{pro:lBfg} and Corollary~\ref{cor:lBfg} apply in particular to all congruence permutable varieties,
 such as groups, modules, rings and loops, as well as to all congruence distributive varieties, such as lattices.
 In Example~\ref{ex:monlambda} we exhibit a monoid $\C$ with non-modular congruence lattice for which the
 equivalences in Proposition~\ref{pro:lBfg}\ref{it:lBfg2} are not valid.

 The second statement in Corollary~\ref{cor:lBfg}
 raises the question whether there exist finitely presented subdirect products of two non-finitely presented algebras in
 congruence permutable varieties. 
The answer is positive with examples in a variety of modules (Example \ref{exa:Fxymod})
and in the variety of rings (Example \ref{exa:fprings}).
We do not know the answer to this question for groups.

 Statement \ref{SDG2} generalizes from groups to congruence permutable varieties as follows:

\begin{prop} \label{pro:lAfg}
 Let $\A,\B$ be finitely generated algebras in a congruence permutable variety $\vV$, and let $\C\leqsd\A\times\B$.
 If $\lambda_\B$ is a finitely generated congruence on $\B$ (or $\lambda_\A$ is a finitely generated congruence
 on $\A$), then $\C$ is finitely generated.
\end{prop}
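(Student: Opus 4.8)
The plan is to exploit that in a congruence permutable variety every subdirect product is a fiber product (Lemma~\ref{lem:fleischer} and the remark following it), so we may write $C=\{(a,b)\in A\times B : g(a)=h(b)\}$ for epimorphisms $g\colon\A\to\D$, $h\colon\B\to\D$, with $\lambda_\B=\ker h$. Fix finite generating sets $\A=\langle a_1,\dots,a_m\rangle$, $\B=\langle b_1,\dots,b_n\rangle$, and finitely many pairs $(u_1,v_1),\dots,(u_r,v_r)$ generating the congruence $\lambda_\B$ on $\B$. For each $a_i$ I would choose a lift $\hat a_i=(a_i,a_i')\in C$, for each $b_j$ a lift $\hat b_j=(b_j',b_j)\in C$, and for each $k$ -- using $h(u_k)=h(v_k)$ to pick $c_k\in A$ with $g(c_k)=h(u_k)=h(v_k)$ -- the two elements $\hat u_k=(c_k,u_k)$ and $\hat v_k=(c_k,v_k)$ of $C$, which deliberately share their first coordinate. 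Setting $G:=\langle \hat a_i,\hat b_j,\hat u_k,\hat v_k\rangle$ gives a finitely generated subalgebra with $\pi_\A(G)=A$ and $\pi_\B(G)=B$, and it remains to prove $G=\C$.

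First I would reduce $G=\C$ to a saturation statement. Since $\pi_\A(G)=\pi_\A(C)=A$, the subalgebra $G$ meets every $\ker\pi_\A$-class of $\C$, so $G=\C$ as soon as $G$ is a union of $\ker\pi_\A$-classes. Concretely it suffices to establish the property, call it $(\star)$, that whenever $(a,b_2)\in G$ and $(a,b_1)\in C$ then $(a,b_1)\in G$. Note that $(a,b_1),(a,b_2)\in C$ force $h(b_1)=g(a)=h(b_2)$, so $(b_1,b_2)\in\ker h=\lambda_\B$ is automatic (cf.\ the description of $\lambda_\B$ in Lemma~\ref{lem:fibered}).

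The crux is the proof of $(\star)$, and here the Mal'cev term $m$ does the work. As $(b_1,b_2)\in\lambda_\B=\Cg_\B(\{(u_k,v_k)\})$, there is a Mal'cev chain $b_2=w_0,w_1,\dots,w_\ell=b_1$ in which consecutive entries arise as $\{w_s,w_{s+1}\}=\{t_s^\B(u_{k_s},\vec\beta_s),\,t_s^\B(v_{k_s},\vec\beta_s)\}$ for a term $t_s$ and parameters $\vec\beta_s\in B$. I would prove $(a,w_s)\in G$ by induction on $s$, the base case being $(a,w_0)=(a,b_2)\in G$. For the inductive step, realize the parameters inside $G$: since $\pi_\B(G)=B$, choose $\vec g_s$ from $G$ with $\pi_\B(\vec g_s)=\vec\beta_s$, and form $P:=t_s^\C(\hat u_{k_s},\vec g_s)$ and $Q:=t_s^\C(\hat v_{k_s},\vec g_s)$ in $G$ (reordering so that the second coordinate of $P$ is $w_s$). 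Because $\hat u_{k_s}$ and $\hat v_{k_s}$ share the first coordinate $c_{k_s}$, the elements $P$ and $Q$ have a common first coordinate $d_s=t_s^\A(c_{k_s},\pi_\A(\vec g_s))$, while their second coordinates are $w_s$ and $w_{s+1}$. Applying the Mal'cev term coordinatewise,
\[ m^\C\bigl((a,w_s),P,Q\bigr)=\bigl(m(a,d_s,d_s),\,m(w_s,w_s,w_{s+1})\bigr)=(a,w_{s+1})\in G, \]
using $m(y,x,x)=y$ on the first coordinate and $m(x,x,y)=y$ on the second. This completes the induction, so $(a,b_1)=(a,w_\ell)\in G$; thus $(\star)$ holds and $\C=G$ is finitely generated.

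The main obstacle is that the chain witnessing $(b_1,b_2)\in\lambda_\B$ has a priori unbounded length and involves arbitrary parameters $\vec\beta_s\in B$, so a naive coordinatewise substitution would corrupt the first ($\A$-)coordinate at every step. The device that overcomes this is the choice of $\hat u_k,\hat v_k$ with a common first coordinate $c_k$, which forces $P$ and $Q$ to agree in their $\A$-coordinate, followed by the Mal'cev correction $m(a,d_s,d_s)=a$ that pins the $\A$-coordinate to $a$ throughout the induction; the hypothesis $\pi_\B(G)=B$ is precisely what allows every parameter tuple to be lifted into $G$. The case in which $\lambda_\A$ rather than $\lambda_\B$ is finitely generated follows by interchanging the roles of $\A$ and $\B$.
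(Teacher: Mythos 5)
Your proof is correct, and in its essentials it coincides with the paper's: the generating set you build --- lifts of generators of $\A$ and $\B$ together with the pairs $\hat u_k=(c_k,u_k)$, $\hat v_k=(c_k,v_k)$ sharing a first coordinate --- is exactly the paper's set $X'\cup Y'\cup U'$, and your Mal'cev correction $m(a,d_s,d_s)=a$ pinning the $\A$-coordinate is the same central device. The one genuine difference is how the hypothesis $(b_1,b_2)\in\lambda_\B$ is unwound. The paper invokes the one-step description of generated congruences in Mal'cev algebras (\cite[Theorem 4.70 (iii)]{MMT:ALV}), which yields a \emph{single} term $t$ with $b=t^\B(u_1,\dots,u_k,y_1,\dots,y_\ell)$ and $c=t^\B(v_1,\dots,v_k,y_1,\dots,y_\ell)$, so only one application of $m$ is needed and no saturation reduction is required. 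You instead use the generic Mal'cev chain description of $\Cg_\B(U)$, valid in every variety, first reduce $G=\C$ to the saturation property $(\star)$ via $\pi_\A(G)=A$, and then induct along the chain with one Mal'cev correction per link. Your route is slightly longer but buys some things: it is self-contained in not relying on the congruence-permutable-specific generation theorem, it makes visible that permutability enters only through the term $m$ itself, and your lifting of arbitrary parameters through $\pi_\B(G)=B$ replaces the paper's normalization of parameters to generators of $\B$. All auxiliary steps you use are sound: $\lambda_\B=\ker h$ and the existence of the common lifts $c_k$ follow from Lemma~\ref{lem:fibered} and surjectivity of $g$, and $\pi_\B(G)=B$ holds because $G$ contains the lifts $\hat b_j$.
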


\begin{proof}
 Let $X,Y$ be finite generating sets of $\A,\B$, respectively. Let $U$ be a finite generating
 set for $\lambda_\B$. We are going to lift $X,Y,U$ to $C$.
 Since $C$ is subdirect, for every $x\in X$ there exists $b_x\in B$ such that $(x,b_x)\in C$.
 Similarly, for every $y\in Y$ there exists $a_y\in A$ such that $(a_y,y)\in C$.
 For every $(u,v)\in U$ we have $a_{u,v}\in A$ such that $(a_{u,v},u),(a_{u,v},v) \in C$ by Lemma~\ref{lem:fibered}.
 Now let 
\begin{align*}
 X' & = \bigl\{ (x,b_x) \setsuchthat x\in X\bigr\}, \\
 Y' & = \bigl\{ (a_y,y) \setsuchthat y\in Y\bigr\}, \\
 U' & = \bigcup\bigl\{  (a_{u,v},u),(a_{u,v},v) \setsuchthat (u,v)\in U \bigr\}.
\end{align*}
 We claim that 
\begin{equation} \label{eq:CXYU}
 C = \langle X'\cup Y'\cup U'\rangle.
\end{equation}
 To prove this let $(a,b)\in C$. Since $X$ generates $A$, we have some $c\in B$ such that
\[ (a,c)\in\langle X'\rangle. \]
 Note that $(b,c)\in\lambda_\B$. Since $U$ generates $\lambda_\B$,
 we have $(u_1,v_1),\dots,(u_k,v_k)\in U$, $y_1,\dots,y_\ell\in Y$, and a $k+\ell$-ary
 term $t$ such that 
\[ b = t^\B(u_1,\dots,u_k,y_1,\dots,y_\ell),\ c = t^\B(v_1,\dots,v_k,y_1,\dots,y_\ell); \] 
see \cite[Theorem 4.70 (iii)]{MMT:ALV}.
 We lift the arguments of $t^\B$ to $C$ and write
 $d:=t^\A(a_{u_1,v_1},\dots,a_{u_k,v_k},a_{y_1},\dots,a_{y_\ell})$ to obtain
\[ (d,b), (d,c)\in \langle Y'\cup U'\rangle. \]  
 Applying the Mal'cev operation $m$ for the variety $\vV$ to the triple
\[
 (d,b) , (d,c) , (a,c) \in\langle X^\prime\cup Y^\prime\cup U^\prime\rangle
\]
 yields $(a,b)\in\langle X'\cup Y'\cup U'\rangle$. Hence~\eqref{eq:CXYU} is proved.
\end{proof}

 We note that Proposition~\ref{pro:lAfg} does not generalize to congruence modular varieties; for instance
in Example~\ref{exa:lat} we will show that it fails for lattices.
The proposition also fails for monoids; see Example \ref{exmon1}.
 Also the converse of Proposition~\ref{pro:lAfg} does not hold
even for classical congruence permutable varieties,
 as seen in Example \ref{exa:Fxymod} for modules and Example \ref{exa:baumslag} for groups.

 Finally, the statement \ref{SDG3} takes the following form in congruence permutable varieties:

\begin{cor} \label{cor:ABfp}
 Let $\A,\B$ be finitely presented algebras in a congruence permutable variety, and let $\C\leqsd\A\times\B$.
 Then $\C$ is finitely generated if and only if one (and hence both) of $\A/\lambda_\A$ or $\B/\lambda_\B$
 are finitely presented.
\end{cor}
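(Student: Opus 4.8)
The plan is to obtain both implications directly from the two-factor results already in hand, supported by a single auxiliary fact from universal algebra: for a finitely presented algebra $\B$ and a congruence $\lambda$ of $\B$, the quotient $\B/\lambda$ is finitely presented if and only if $\lambda$ is a finitely generated congruence. Throughout I would use that $\A/\lambda_\A\cong\B/\lambda_\B$, both being isomorphic to $\C/(\ker\pi_\A\join\ker\pi_\B)$ as recorded in Section~\ref{sec:prelim}; this makes finite presentability of one equivalent to that of the other, which is exactly what justifies the phrase ``one (and hence both)''. Since a congruence permutable variety is congruence modular, both Corollary~\ref{cor:lBfg} and Proposition~\ref{pro:lAfg} apply.

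For the forward implication I would assume $\C$ finitely generated. As $\A$ is finitely presented, Corollary~\ref{cor:lBfg} gives at once that $\lambda_\B$ is a finitely generated congruence of $\B$. Combining this with $\B$ finitely presented, the auxiliary fact yields that $\B/\lambda_\B$, and hence $\A/\lambda_\A$, is finitely presented.

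For the converse I would assume $\B/\lambda_\B$ (equivalently $\A/\lambda_\A$) finitely presented. The auxiliary fact, applied in the reverse direction with $\B$ finitely presented, shows that $\lambda_\B$ is a finitely generated congruence. Since $\A$ and $\B$ are finitely generated (being finitely presented) and $\vV$ is congruence permutable, Proposition~\ref{pro:lAfg} then gives that $\C$ is finitely generated.

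The one ingredient not among the quoted results, and the step needing the most care, is the auxiliary fact. Its ``if'' direction is routine: adjoin a finite congruence-generating set of $\lambda$ to a finite presentation of $\B$. The ``only if'' direction rests on the standard principle that a finitely presented algebra has a finite presentation relative to \emph{any} finite generating set. Taking a finite generating set $Y$ of $\B$ and passing to the free algebra $\F_\vV(Y)$, one writes $\B=\F_\vV(Y)/\Cg(R)$ and $\B/\lambda=\F_\vV(Y)/\Cg(R'')$ with $R\subseteq R''$ finite; by the correspondence theorem $\lambda$ is then $\Cg(R'')/\Cg(R)$ viewed on $\B$, hence generated by the finitely many images of the pairs in $R''$. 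I would either cite this from a standard reference or insert it as a short preliminary lemma.
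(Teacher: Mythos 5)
Your proposal is correct and takes essentially the same route as the paper's proof: the forward direction deduces that $\lambda_\B$ is a finitely generated congruence from $\C$ finitely generated and $\A$ finitely presented (the paper cites Proposition~\ref{pro:lBfg}\ref{it:lBfg1}, your citation of Corollary~\ref{cor:lBfg} amounts to the same thing) and then passes to finite presentability of $\B/\lambda_\B$, while the converse recovers finite generation of $\lambda_\B$ from finite presentability of $\B/\lambda_\B$ and concludes via Proposition~\ref{pro:lAfg}. The only difference is that you make explicit, with a correct sketch, the standard correspondence between finitely generated congruences of a finitely presented (or finitely generated) algebra and finitely presented quotients, which the paper invokes without comment.
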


\begin{proof}
 First assume that $\C$ is finitely generated.  Then $\lambda_\B$ is finitely generated by
 Proposition~\ref{pro:lBfg}\ref{it:lBfg1}. Since $\B$ is finitely presented, this implies that $\B/\lambda_\B$
 is finitely presented as well.

 For the converse implication assume that $\B/\lambda_\B$ is finitely presented. Since $\B$ is finitely generated,
 this yields that $\lambda_\B$ is finitely generated. Hence $\C$ is finitely generated by Proposition~\ref{pro:lAfg}.
\end{proof}

 Again, Corollary \ref{cor:ABfp} does not generalize to monoids or lattices; see Examples \ref{exmon1} and \ref{exa:lat}.

\section{Multiple factors in congruence permutable varieties} \label{sec:mult}

 The structure of subdirect products with more than two factors
 turns out to be surprisingly complicated. One problem is that fiber products and Fleischer's
 Lemma~\ref{lem:fleischer} do not behave well under iteration.
It is therefore natural to try to relate finiteness conditions of such products to properties of their projections on pairs.
This is one of the themes in the work of Bridson, Howie, Miller and Short \cite{BHM:FPSP}, where they prove the following results:
\begin{enumerate}[label=\textsf{(G\arabic*)}, widest=(G4), leftmargin=10mm]
\setcounter{enumi}{3}
\item
\label{it:G4}
If $G_1,\dots,G_n$ are finitely presented (resp. finitely generated) groups and $S\leqsd G_1\times\dots\times G_n$ is
virtually surjective on pairs
(which in this case means that the projection of $S$ on any two factors has finite index in their direct product $G_i\times G_j$),
then $S$ is finitely presented (resp. finitely generated) as well.
\end{enumerate}
The finite presentability result is \cite[Theorem A]{BHM:FPSP}; the finite generation result is not explicitly stated or proved, but is implicitly established during the proof of the finite presentability result.
 One may wonder to what extent these carry over to (at least) congruence permutable varieties.

In this vein we prove the following result, showing an intricate connection between subdirect products $\C\leqsd \A_1\times\dots\times\A_n$
 of Mal'cev algebras and their projections to any pair of factors $\A_i\times\A_j$ via (higher)
 commutators of
factor kernels.

\begin{thm} \label{thm:mult}
 Let $n\geq 2$, and let $\C\leqsd \A_1\times\dots\times\A_n$ in a congruence permutable variety.
 For $1\leq i,j \leq n$, let $\lambda_{ij} := \pi_j(\ker\pi_i\vee\ker\pi_j)$
 and $\gamma_j := [\lambda_{1j},\dots,\lambda_{j-1,j},\lambda_{j+1,j},\dots,\lambda_{nj}]$ be congruences on $\A_j$.
 Then
\begin{enumerate}[label=\textup{(\alph*)}, widest=(b), leftmargin=10mm]
\item \label{it:mcom}
 $C$ is a union of $\gamma_1\times\dots\times\gamma_n$-classes in $A_1\times\dots\times A_n$.
\item \label{it:msub}
 If $\D\leq\A_1\times\dots\times\A_n$ such that $\pi_{ij}(\D) = \pi_{ij}(\C)$ for all $1\leq i<j \leq n$ and
 $C/\gamma_1\times\dots\times\gamma_n\subseteq D/\gamma_1\times\dots\times\gamma_n$, then
$\C\leq\D$.
\item \label{it:mfg}
 $\C$ is finitely generated if and only if $\C/\gamma_1\times\dots\times\gamma_n$ and $\pi_{ij}(\C)$ for all $1\leq i<j \leq n$
 are finitely generated. 
\end{enumerate}  
\end{thm}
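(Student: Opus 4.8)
The plan is to prove \ref{it:mcom} first, then derive \ref{it:msub} from \ref{it:mcom} applied to both $\C$ and $\D$, and finally obtain \ref{it:mfg} from \ref{it:msub} together with a finite lifting argument. Throughout I write $\eta_i=\ker\pi_i$ for the projection kernels of $\C$, and abbreviate $\theta:=\gamma_1\times\dots\times\gamma_n$ for the product congruence on $A_1\times\dots\times A_n$.

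For part \ref{it:mcom}, since $\theta$ is the composite of the pairwise commuting congruences ``change only coordinate $j$ within $\gamma_j$'', it suffices to show that $\C$ is closed under each such single-coordinate move; by symmetry I treat coordinate $n$. Fixing $(a_1,\dots,a_{n-1})$, the corresponding fiber $F=\{b\in A_n\setsuchthat (a_1,\dots,a_{n-1},b)\in C\}$ is, when nonempty, exactly a class of the congruence $\delta_n:=\pi_n\bigl(\bigwedge_{i<n}\eta_i\bigr)$ on $A_n$: one inclusion is immediate from the definition of $\delta_n$, and for the other I apply the Mal'cev term $m$ coordinatewise to suitable triples in $\C$ to correct the last coordinate while fixing the first $n-1$. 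Thus it remains to prove $\gamma_n\le\delta_n$, which is the crux. For this I view $\C\leqsd A'\times A_n$ with $A'=\pi_{1\dots n-1}(\C)$ and apply Lemma~\ref{le:sdcom} to the congruences $\eta_1,\dots,\eta_{n-1}$ of $\C$. Since $\pi_n(\eta_i)=\pi_n(\eta_i\join\eta_n)=\lambda_{in}$ (adjoining $\ker\pi_n$ does not change the image), the lemma gives that $[\eta_1,\dots,\eta_{n-1}]$ projects \emph{onto} $[\lambda_{1n},\dots,\lambda_{n-1,n}]=\gamma_n$ under $\pi_n$. On the other hand \ref{it:hc1} yields $[\eta_1,\dots,\eta_{n-1}]\le\bigwedge_{i<n}\eta_i$, so $\gamma_n=\pi_n[\eta_1,\dots,\eta_{n-1}]\le\pi_n\bigl(\bigwedge_{i<n}\eta_i\bigr)=\delta_n$. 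Hence each fiber, being a $\delta_n$-class, is a union of $\gamma_n$-classes, which is exactly the required closure.

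Part \ref{it:msub} then follows almost formally. From $\pi_{ij}(\D)=\pi_{ij}(\C)$ I get that $\D$ is subdirect and that its factor kernels $\lambda_{ij}$, and hence the congruences $\gamma_j$, coincide with those of $\C$; therefore part \ref{it:mcom} applies to $\D$ as well, and both $\C$ and $\D$ are unions of $\theta$-classes. The hypothesis $C/\theta\subseteq D/\theta$ then says that every $\theta$-class contained in $C$ is also contained in $D$, whence $\C\le\D$.

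For part \ref{it:mfg} the forward implication is clear, as quotients and projections of a finitely generated algebra are finitely generated. Conversely, suppose $\C/\theta$ and all $\pi_{ij}(\C)$ are finitely generated. I build a finite set $G\subseteq C$ by lifting finite generating sets of $\C/\theta$ and of each of the (finitely many) $\pi_{ij}(\C)$ back to $C$, and set $\D=\langle G\rangle$. By construction $\pi_{ij}(\D)=\pi_{ij}(\C)$ for all $i<j$ and $C/\theta\subseteq D/\theta$, so part \ref{it:msub} gives $\C\le\D\le\C$, i.e.\ $\C=\langle G\rangle$ is finitely generated. The main obstacle is the commutator inequality $\gamma_n\le\delta_n$ in part \ref{it:mcom}; once Lemma~\ref{le:sdcom} and \ref{it:hc1} are combined as above, the remaining steps are routine lattice- and lifting-arguments.
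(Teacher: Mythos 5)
Your proposal is correct and takes essentially the same approach as the paper: the crux in both is combining Lemma~\ref{le:sdcom} with \ref{it:hc1} to obtain $\gamma_n\leq\pi_n\bigl(\bigwedge_{i<n}\ker\pi_i\bigr)$ (using congruence permutability to identify $\pi_n(\ker\pi_i)$ with $\lambda_{in}$), and your arguments for parts \ref{it:msub} and \ref{it:mfg} are exactly the paper's. The only deviation is cosmetic: where the paper invokes Lemma~\ref{lem:fibered}\ref{it:Clalb} for $\C\leqsd\pi_{1\dots n-1}(\C)\times\A_n$ with $\delta=\bigl(\bigwedge_{i<n}\ker\pi_i\bigr)\vee\ker\pi_n$, you re-derive the same closure property directly, via the Mal'cev-term computation showing each fiber over fixed first $n-1$ coordinates is a full $\delta_n$-class.
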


\begin{proof}
For $n = 2$ item~\ref{it:mcom} of the previous theorem is exactly Lemma~\ref{lem:fibered}\ref{it:Clalb}
 and items~\ref{it:msub}, \ref{it:mfg} are trivial.
So we assume $n\geq 3$ throughout.

 \ref{it:mcom}
For $i\leq n$ and $\rho_i := \ker\pi_i$ we have
\[ \rho_i \leqsd \lambda_{i1}\times\dots\times\lambda_{i,i-1}\times 0\times \lambda_{i,i+1}\times\dots\times\lambda_{in}. \]
 Since any commutator involving $0$ is trivial by~\ref{it:hc1}, 
 Lemma~\ref{le:sdcom} yields
\[ [\rho_1,\dots,\rho_{n-1}] = 0\times\dots\times 0\times [\lambda_{1n},\dots,\lambda_{n-1,n}]. \]
 Since commutators are contained in the intersection of their arguments by~\ref{it:hc1}, this implies
\begin{equation} \label{eq:0gn}
 0^{n-1}\times\gamma_n \leq \bigwedge_{i\in I} \rho_i,
\end{equation}
where $I=\{1,\dots,n-1\}$.
 Now consider $\C$ as a subdirect product of $\pi_I(\C)$ and $\A_n$, and let
 $\delta := \bigl(\bigwedge_{i\in I} \rho_i\bigr)\vee\rho_n$. Lemma~\ref{lem:fibered}\ref{it:Clalb} states that $C$ is a union
 of $\pi_{I}(\delta)\times \pi_n(\delta)$-classes in $\pi_{I}(\C)\times\A_n$.
 Since $\gamma_n \leq \pi_n(\delta)$ by~\eqref{eq:0gn}, this implies that $C$ is a union of
 $0^{n-1}\times\gamma_n$-classes in $A_1\times\dots\times A_n$.
 By symmetry $C$ is a union of $0^{i-1}\times\gamma_i\times 0^{n-i}$-classes for each $i\leq n$, hence also for the
 join of all these congruences.
 Thus~\ref{it:mcom} is proved.

 \ref{it:msub} Note that $\D\leqsd\A_1\times\dots\times\A_n$
 with the same congruences $\lambda_{ij}$ as for $\C$
 by the assumption that $\pi_{ij}(\D)=\pi_{ij}(\C)$ for all
 $1\leq i,j\leq n$. Hence $D$ is also a union of $\gamma_1\times\dots\times\gamma_n$-classes by~\ref{it:mcom}.
 Together with the assumption that $C/\gamma_1\times\dots\times\gamma_n \subseteq D/\gamma_1\times\dots\times\gamma_n$,
 this yields $\C \leq \D$.

 \ref{it:mfg} 
The forward direction is trivial. For the converse assume that
 there exists a finite subset $X$ of $C$ such that $X$ generates $\C$ modulo $\gamma_1\times\dots\times\gamma_n$
 and $\pi_{ij}(X)$ generates $\pi_{ij}(\C)$ for all $i<j$. Now $\D:=\langle X\rangle$  
 satisfies the assumptions of~\ref{it:msub} and consequently $\D = \C$.
 This completes the proof of~\ref{it:mfg} and of the Theorem.
\end{proof}

 Concerning~\ref{it:mfg} of the previous theorem we note that in general for a subdirect product to be finitely
 generated it is not enough that the projection onto any pair of factors is finitely generated; see
 Example~\ref{exa:pairsfg1} for groups.
 In fact, Example~\ref{exa:pairsfg} exhibits for any $n\in\N$ a non-finitely generated $n$-fold subdirect
 product of modules whose projection on any $n-1$ components is finitely generated.

 We call a projection $\pi_{ij}$ from $\C\leqsd\A_1\times\dots\times\A_n$ to $\A_i\times\A_j$ for $i<j$
 \emph{virtually surjective} if $\pi_{ij}(\C)$ is a subdirect product over a finite quotient,
i.e.,
 $\lambda_{ij} := \pi_j(\ker\pi_i\vee\ker\pi_j)$ has finitely many congruence classes in $\A_j$.

As an application of Theorem \ref{thm:mult} we prove the following criterion for finite generation in certain congruence permutable varieties. It certainly applies to groups and rings, and for the former represents a 
small generalization of the finite generation part of \ref{it:G4}, 
due to the fact that one projection is allowed to be simply finitely generated.
 
\begin{cor}  \label{cor:vs2}
 Let $n\geq 2$, and let $\vV$ be a congruence permutable variety such that all congruences
 of finitely generated virtually supernilpotent algebras of class $n-2$ in $\vV$ are finitely generated.
 Let $\A_1,\dots,\A_n\in\vV$ be finitely generated, and let $\C\leqsd \A_1\times\dots\times\A_n$ such that
 $\pi_{12}(\C)$ is finitely generated and $\pi_{ij}$ is virtually surjective for all $1\leq i<j\leq n, (i,j)\neq(1,2)$.
 Then $\C$ is finitely generated.
\end{cor}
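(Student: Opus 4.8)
The plan is to verify the two hypotheses of Theorem~\ref{thm:mult}\ref{it:mfg} — that every pairwise projection $\pi_{ij}(\C)$ is finitely generated, and that the quotient $\overline{\C}:=\C/(\gamma_1\times\dots\times\gamma_n)$ is finitely generated — from which that theorem immediately yields finite generation of $\C$. The pairwise projections are quickly dealt with: $\pi_{12}(\C)$ is finitely generated by assumption, and for $(i,j)\neq(1,2)$ the map $\pi_{ij}$ is virtually surjective, so $\lambda_{ij}$ has finite index in the finitely generated algebra $\A_j$ and is therefore a finitely generated congruence (a finite-index congruence of a finitely generated algebra is finitely generated). Since $\A_i,\A_j$ are finitely generated, Proposition~\ref{pro:lAfg} applied to $\pi_{ij}(\C)\leqsd\A_i\times\A_j$ then shows $\pi_{ij}(\C)$ is finitely generated.

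The key structural step concerns the factors $\A_j/\gamma_j$ for $j\ge 3$. Put $\alpha_j:=\bigwedge_{i\neq j}\lambda_{ij}$. For $j\ge 3$ every pair $\{i,j\}$ with $i\neq j$ differs from $\{1,2\}$, so $\pi_{ij}$ is virtually surjective and each $\lambda_{ij}$, and hence $\alpha_j$, has finite index in $\A_j$. By~\ref{it:hc1} we have $\gamma_j\le\alpha_j$, so $\alpha_j/\gamma_j$ is a finite-index congruence of $\A_j/\gamma_j$; and since $\alpha_j\le\lambda_{ij}$ for all $i\neq j$, monotonicity~\ref{it:hc2} gives the inclusion $[\alpha_j,\dots,\alpha_j]\le[\lambda_{1j},\dots,\lambda_{j-1,j},\lambda_{j+1,j},\dots,\lambda_{nj}]=\gamma_j$ for the $(n-1)$-fold commutator. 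Passing to $\A_j/\gamma_j$, the $(n-1)$-fold commutator of $\alpha_j/\gamma_j$ vanishes, so $\A_j/\gamma_j$ is a finitely generated (being a quotient of $\A_j$) virtually supernilpotent algebra of class at most $n-2$; the variety hypothesis then guarantees that every congruence of $\A_j/\gamma_j$ is finitely generated.

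It remains to show that $\overline{\C}\leqsd(\A_1/\gamma_1)\times\dots\times(\A_n/\gamma_n)$ is finitely generated, which I would do by peeling off the good factors $\A_n/\gamma_n,\dots,\A_3/\gamma_3$ one at a time. Concretely, by induction on $m$ from $2$ to $n$ one shows that $\pi_{\{1,\dots,m\}}(\overline{\C})$ is finitely generated: the base case $\pi_{\{1,2\}}(\overline{\C})$ is a quotient of the finitely generated $\pi_{12}(\C)$, and in the inductive step one views $\pi_{\{1,\dots,m+1\}}(\overline{\C})\leqsd\pi_{\{1,\dots,m\}}(\overline{\C})\times(\A_{m+1}/\gamma_{m+1})$, where both factors are finitely generated and, because $m+1\ge 3$, the factor kernel on $\A_{m+1}/\gamma_{m+1}$ is one of its congruences and hence finitely generated by the previous paragraph; Proposition~\ref{pro:lAfg} then applies. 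The case $m=n$ gives $\overline{\C}$ finitely generated.

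The main obstacle, and the real content of the argument, is that the two factors $\A_1/\gamma_1$ and $\A_2/\gamma_2$ need \emph{not} be virtually supernilpotent, since $\lambda_{21}$ and $\lambda_{12}$ may have infinite index, so the variety hypothesis cannot be invoked for them. The resolution is organisational: one confines these two defective factors to the base pair $\pi_{12}(\overline{\C})$, where finite generation is handed to us directly by the hypothesis on $\pi_{12}(\C)$, and peels only the good factors $\A_j/\gamma_j$ with $j\ge 3$. Working entirely inside the quotients $\A_j/\gamma_j$ is precisely what lets us use finite generation of \emph{their} congruences while avoiding the much harder question of whether the commutators $\gamma_j$ are themselves finitely generated as congruences of $\A_j$.
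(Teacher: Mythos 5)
Your proposal is correct and follows essentially the same route as the paper's proof: both verify the hypotheses of Theorem~\ref{thm:mult}\ref{it:mfg}, get finite generation of the pairwise projections from Proposition~\ref{pro:lAfg} (using that a finite-index congruence of a finitely generated algebra is finitely generated), deduce via monotonicity~\ref{it:hc2} of the $(n-1)$-ary commutator that $\A_j/\gamma_j$ is finitely generated and virtually supernilpotent of class at most $n-2$, and then peel off these well-behaved factors one at a time with Proposition~\ref{pro:lAfg}. The only difference is bookkeeping: the paper runs an induction on $n$, reapplying the corollary to $\pi_{\{1,\dots,n-1\}}(\C)$ and invoking Theorem~\ref{thm:mult}\ref{it:mfg} at each level, whereas you invoke Theorem~\ref{thm:mult}\ref{it:mfg} once and run your induction on coordinates inside the quotient $\C/\gamma_1\times\dots\times\gamma_n$ --- an equivalent reorganization of the same argument.
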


\begin{proof}
 We use induction on $n$. The base case for $n=2$ holds by the assumption on $\pi_{12}(\C)$.
 For $n\geq 3$ the other projections $\pi_{ij}(\C)$ for $1\leq i<j\leq n, (i,j)\neq(1,2)$,
 are subdirect products of $\A_i$ and $\A_j$ over some finite quotient and are finitely generated by
 Proposition~\ref{pro:lAfg}.
 For checking the remaining conditions in Theorem~\ref{thm:mult}\ref{it:mfg}
 let $\lambda_{in} := \pi_n(\ker\pi_i\vee\ker\pi_n)$ for $i<n$. 
Since $\pi_{in}$ is virtually surjective,
 $\A_n/\lambda_{in}$ is finite. 
 Since the $(n-1)$-ary commutator of $\bigwedge_{i=1}^{n-1}\lambda_{in}$ with itself is below
 $\gamma_n := [\lambda_{1n},\dots,\lambda_{n-1n}]$ by~\ref{it:hc2},
 the algebra $\A_n/\gamma_n$ is virtually supernilpotent of class at most $n-2$.
 Let $\B$ be the projection of $\C$ on the first $n-1$ coordinates.
 By Theorem~\ref{thm:mult}\ref{it:mfg} it suffices to show that
 $\C/0\times\dots\times 0\times\gamma_n\leqsd\B\times\A_n/\gamma_n$ is finitely generated.
 Since $\B$ is finitely generated by the induction assumption and all congruences of $\A_n/\gamma_n$ are finitely
 generated by the assumption of the corollary, this follows from Proposition~\ref{pro:lAfg}.
\end{proof}

\section{Groups and loops} \label{sec:group}

In this section we gather some observations about the subdirect products of groups and loops against the general background developed in the previous two sections.

We begin with an example of a finitely generated subdirect product of two non-finitely presented groups, which shows that the converse of Proposition \ref{pro:lAfg} does not hold.

\begin{exa} \label{exa:baumslag}
Let $X$ be a finite set ($|X|\geq 2$) and let $F(X)$ denote the free group on $X$.
Inside $F(X)$ pick a free subgroup $H$ of infinite rank, and let $Y$ be a basis for $H$.
Now make two copies $X^\prime,X^{\prime\prime}$ of $X$, with $x\mapsto x^\prime$ and $x\mapsto x^{\prime\prime}$ bijections, and consider the free group $F(X^\prime\cup X^{\prime\prime})$, with the above bijections extending to two embeddings $F(X)\rightarrow F(X^\prime\cup X^{\prime\prime})$. 

Let $R:=\{ y^\prime(y^{\prime\prime})^{-1}\::\: y\in Y\}\subseteq F(X^\prime\cup X^{\prime\prime})$.
The quotient $F(X^\prime\cup X^{\prime\prime})/N(R)$,
where $N(R)$ stands for the normal closure of $R$, is naturally isomorphic to the free product $F(X)\ast_H F(X)$ of two copies of $F(X)$ amalgamated over $H$.
More generally, for any $Z\subseteq Y$ let $R_Z:=\{ y^\prime (y^{\prime\prime})^{-1}\::\: y\in Z\}$, and note that
$F(X^\prime\cup X^{\prime\prime})/N(R_Z)$ is naturally isomorphic to the free product of two copies of $F(X)$ amalgamated over $\langle Z\rangle\leq H\leq F(X)$.
In particular, the normal subgroups $N(R_Z)$ ($Z\subseteq Y$) are pairwise distinct.

Now partition $Y$ into two disjoint infinite subsets $Y=Y_1\cup Y_2$,
and let $R_i:=R_{Y_i}$, $M_i:=N(R_i)\unlhd F(X^\prime\cup X^{\prime\prime})$
for $i=1,2$, and $M:=M_1\cap M_2$.
Note that $M_1M_2=N(R)$.

So the finitely generated group $F(X^\prime\cup X^{\prime\prime})/M$ is
a subdirect product of $F(X^\prime\cup X^{\prime\prime})/M_i$ $(i=1,2)$
over the quotient $F(X^\prime\cup X^{\prime\prime})/N(R)$.
 We claim that neither of the two factor kernels is finitely generated.

The normal subgroup of $F(X^\prime\cup X^{\prime\prime})/M_1$ corresponding to the first factor kernel is $N(R)/M_1$.
Suppose that it is finitely normally generated inside $F(X^\prime\cup X^{\prime\prime})/M_1$.
This would imply the existence of a finite subset $U\subseteq Y_2$ such that $U/M_1$ normally generates $N(R)/M_1$.
But then the normal subgroup of $F(X^\prime\cup X^{\prime\prime})$ generated by $R_{Y_1\cup U}$ would be $N(R)=N(R_Y)$ even though $Y_1\cup U\neq Y$, a contradiction. It follows that the first projection kernel is not finitely generated, and, by symmetry, neither is the second.
\end{exa}

We now move on to subdirect products of more than two factors.
 By Corollary~\ref{cor:vs2} every subdirect product of finitely generated groups with virtual surjective projections
 on all pairs $i<j$ is finitely generated, which, as we mentioned earlier, is already implicit in~\cite{BHM:FPSP}.
In this section we consider whether a weaker finiteness condition for projections to pairs might be sufficient to guarantee finite generation.

 Indeed, for three factors we can prove the following stronger result:

\begin{thm} \label{thm:3groups}
 Let $\C\leqsd \A_1\times\A_2\times\A_3$ be a subdirect product of finitely generated groups $\A_1,\A_2,\A_3$ such that
 the projection onto any two components is a subdirect product over a virtually nilpotent quotient.
 Then $\C$ is finitely generated.
\end{thm}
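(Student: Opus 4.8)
The plan is to apply Theorem~\ref{thm:mult}\ref{it:mfg}, which reduces finite generation of $\C$ to finite generation of the three pairwise projections $\pi_{ij}(\C)$ together with the quotient $\C/(\gamma_1\times\gamma_2\times\gamma_3)$. Throughout I identify congruences of groups with normal subgroups and write $L_{ij}\unlhd\A_j$ for the normal subgroup corresponding to $\lambda_{ij}=\pi_j(\ker\pi_i\vee\ker\pi_j)$; thus $\gamma_3$ corresponds to the group commutator $[L_{13},L_{23}]$, and analogously for $\gamma_1,\gamma_2$. For the pairwise projections, note that $\A_j/L_{ij}$ is exactly the common quotient of $\pi_{ij}(\C)\leqsd\A_i\times\A_j$, which by hypothesis is virtually nilpotent; being finitely generated it is therefore polycyclic-by-finite, hence finitely presented. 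As $\A_j$ is finitely generated, this forces $L_{ij}$ to be finitely generated as a normal subgroup, i.e.\ $\lambda_{ij}$ is a finitely generated congruence, and Proposition~\ref{pro:lAfg} then gives that each $\pi_{ij}(\C)$ is finitely generated.

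It remains to treat the quotient. Write $\bar\A_j:=\A_j/\gamma_j$ and $\bar\C:=\C/(\gamma_1\times\gamma_2\times\gamma_3)\leqsd\bar\A_1\times\bar\A_2\times\bar\A_3$, with factor kernels $\bar L_{ij}=L_{ij}/\gamma_j$. Since $\gamma_3=[L_{13},L_{23}]$, in $\bar\A_3$ the two factor kernels commute, $[\bar L_{13},\bar L_{23}]=1$, and similarly in $\bar\A_1,\bar\A_2$. I now view $\bar\C$ as a two-factor subdirect product $\bar\C\leqsd\pi_{12}(\bar\C)\times\bar\A_3$. Here $\pi_{12}(\bar\C)$ is a quotient of the (already established) finitely generated group $\pi_{12}(\C)$, and $\bar\A_3$ is finitely generated, so by Proposition~\ref{pro:lAfg} it suffices to show that the factor kernel $L:=\{c\in\bar\A_3:(1,1,c)\in\bar\C\}$ on $\bar\A_3$ is finitely generated as a normal subgroup. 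From $(1,1,c)\in\bar\C$ we get $(1,c)\in\pi_{13}(\bar\C)$ and $(1,c)\in\pi_{23}(\bar\C)$, so by the description of factor kernels in Lemma~\ref{lem:fibered} we have $L\subseteq\bar L_{13}\cap\bar L_{23}=:V$.

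The crux is to show $L$ is finitely normally generated, and this is where the virtually nilpotent hypothesis does the real work. Since $V\subseteq\bar L_{13}$ and $V\subseteq\bar L_{23}$ while $[\bar L_{13},\bar L_{23}]=1$, the subgroup $V$ is central in $\bar L_{13}\bar L_{23}$; in particular $V$ is abelian, and the conjugation action of $\bar\A_3$ on $V$ factors through $Q:=\bar\A_3/(\bar L_{13}\bar L_{23})$, turning $V$ into a $\Z Q$-module. Now $Q$ is a quotient of $\bar\A_3/\bar L_{13}=\A_3/L_{13}$, which is polycyclic-by-finite, so $Q$ is polycyclic-by-finite and, by a classical theorem of P.\ Hall, its integral group ring $\Z Q$ is Noetherian. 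Moreover $\bar\A_3/V\cong\A_3/(L_{13}\cap L_{23})$ embeds into $\A_3/L_{13}\times\A_3/L_{23}$, a product of two polycyclic-by-finite groups, hence is itself polycyclic-by-finite and finitely presented; as $\bar\A_3$ is finitely generated, $V$ is finitely generated as a normal subgroup, i.e.\ $V$ is a finitely generated $\Z Q$-module. Finally $L$ is a $\Z Q$-submodule of $V$, so it is finitely generated over the Noetherian ring $\Z Q$, which means exactly that $L$ is finitely generated as a normal subgroup of $\bar\A_3$. By Proposition~\ref{pro:lAfg} this shows $\bar\C$ is finitely generated, and Theorem~\ref{thm:mult}\ref{it:mfg} then yields that $\C$ is finitely generated. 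The main obstacle, and the essential use of the hypothesis, is precisely this last step: one must pass to the commutator quotient (so that the relevant factor kernel becomes a \emph{central abelian} module) in order to exploit the Noetherianity of the group ring of a polycyclic-by-finite group; without the quotient the kernel $[L_{13},L_{23}]$ itself need not be finitely normally generated, as the free metabelian example shows.
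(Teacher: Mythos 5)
Your proof is correct, and its overall skeleton coincides with the paper's: both reduce via Theorem~\ref{thm:mult}\ref{it:mfg} to the pairwise projections plus the quotient modulo $\gamma_1\times\gamma_2\times\gamma_3$; both handle the pairwise projections identically (a finitely generated virtually nilpotent quotient is virtually polycyclic, hence finitely presented, so each $\lambda_{ij}$ is a finitely generated congruence and Proposition~\ref{pro:lAfg} applies); and both then view $\bar\C$ as a subdirect product of $\pi_{12}(\bar\C)$ and $\bar\A_3$, reducing everything to finite normal generation of the factor kernel $L$ in $\bar\A_3$. Where you genuinely diverge is the crux. The paper chooses a single finite-index normal subgroup $M\unlhd\A_3$ with $\gamma_k(M)\leq N_{13}\cap N_{23}\leq M$ and quotes P.~Hall's max-n theorem for finitely generated abelian-by-nilpotent groups \cite[Theorem 3]{Ha:FCSG}, obtaining the stronger conclusion that \emph{every} normal subgroup of $\A_3$ is finitely generated over $[N_{13},N_{23}]$. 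You instead note $L\subseteq V:=\bar L_{13}\cap\bar L_{23}$, that $V$ is centralized by $\bar L_{13}\bar L_{23}$ (this is exactly where passing to the quotient by $[L_{13},L_{23}]$ earns its keep), so $V$ becomes a $\Z Q$-module for the virtually polycyclic $Q=\bar\A_3/\bar L_{13}\bar L_{23}$; Noetherianity of $\Z Q$ (also Hall, same 1954 paper) together with finite generation of $V$ as a module (from $\bar\A_3/V$ embedding in $\A_3/L_{13}\times\A_3/L_{23}$, hence virtually polycyclic and finitely presented) then yields $L$ finitely normally generated. The two crux arguments are morally equivalent -- Hall's max-n theorem is itself proved via Noetherian group rings -- but yours makes fully explicit why the commutator quotient is taken (to centralize $V$ so module theory applies), at the cost of the extra containment $L\subseteq V$ and the auxiliary finite presentability of $\bar\A_3/V$, whereas the paper's is shorter once max-n is quoted. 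All your individual steps check out: $\bar L_{ij}=L_{ij}/\gamma_j$ (using that $\gamma_1\times\gamma_2\times\gamma_3$-classes of elements of $C$ lie in $C$ by Theorem~\ref{thm:mult}\ref{it:mcom}), $L\subseteq V$ via Lemma~\ref{lem:fibered}, the standard fact that a finitely generated group mapping onto a finitely presented one has finitely normally generated kernel, and the translation between finite normal generation of an abelian normal subgroup with action through $Q$ and finite generation as a $\Z Q$-module.
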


\begin{proof}
 For $i=1,2,3$, let $N_i$ be the kernel of the projection of $\C$ onto the $i$-th component. Then
\begin{align*}
 N_1 & \leqsd 1\times N_{12} \times N_{13}, \\
 N_2 & \leqsd N_{21}\times 1 \times N_{23}, \\
 N_3 & \leqsd N_{31}\times N_{32} \times 1,
\end{align*}
 for normal subgroups $N_{ij}$ of $\A_j$ ($1\leq i,j\leq 3$).
 Since $\C/N_iN_j$ are finitely generated and virtually nilpotent by assumption, they are
 virtually polycyclic~\cite[5.2.18]{Ro:CTG} and hence finitely presented. 
This group is the quotient of $\pi_{ij}(\C)\cong\C/N_i\cap N_j$ when considered
as a subdirect product of $A_i$ and $A_j$, and hence
$\pi_{ij}(\C)$ is finitely generated by Corollary~\ref{cor:ABfp}.
It now follows from Theorem~\ref{thm:mult}\ref{it:mfg} that $\C$ is finitely generated if and only if 
$\overline{\C}:=\C/K_1\times K_2\times K_3$ is finitely generated,
where $K_i:=[N_{ji},N_{ki}]$, $\{i,j,k\}=\{1,2,3\}$.

Next note that 
$\overline{\C}\leqsd\overline{\A}_1\times\overline{\A}_2\times\overline{\A}_3$,
where $\overline{\A}_i:=\A_i/K_i$.
View $\overline{\C}$ as a subdirect product of $\pi_{12}(\overline{\C})$ and
$\overline{\A}_3$, both of which are finitely generated.
Note that the factor kernel for $\overline{\A}_3$ is
\[
L_{\overline{\A}_3}=\pi_3(\ker\pi_{12})=\pi_3(N_1\cap N_2)/K_3=\pi_3(N_1\cap N_2)/[N_{13},N_{23}].
\]
By Proposition~\ref{pro:lAfg} and Theorem~\ref{thm:mult}\ref{it:mfg} it suffices to show that it is finitely normally generated in $\overline{A}_3$.

 Recall that $\A_3/N_{13}\cong \C/N_1N_3$ and $\A_3/N_{23}\cong \C/N_2N_3$ are virtually nilpotent.
 Then we have a normal subgroup $M$ of finite index in $\A_3$ and $k\in\N$ such that 
 $\gamma_k(M) \leq N_{13}\cap N_{23} \leq M$. Here $\gamma_k(M)$
stands for the $k$-th term of the lower central series of $M$.
 Note that $M$ is finitely generated because it has finite index in the
 finitely generated group $\A_3$. By a result of P. Hall \cite[Theorem 3]{Ha:FCSG}
 every normal subgroup of $M$
 is finitely generated over $[\gamma_k(M),\gamma_k(M)]$. Again using that $|\A_3:M|$ is finite, every normal subgroup
 of $\A_3$ is finitely generated over $[\gamma_k(M),\gamma_k(M)]$, and hence also  over $[N_{13},N_{23}]$.
In particular $\pi_3(N_1\cap N_2)$ is finitely generated over  $[N_{13},N_{23}]$, 
as required. Thus $\overline{\C}$ and $\C$ are finitely generated.
\end{proof}  

\begin{rem}
The assumptions of Theorem \ref{thm:3groups} can be relaxed a tiny bit without altering the conclusion or the proof: it is sufficient to assume that $\pi_{12}(\C)$ is just finitely generated.
\end{rem}

The  following folklore fact, which we give without proof, will  be used to produce abelian normal subgroups
 that are not finitely normally generated.
 
\begin{lem} \label{lem:MX}
 Let $\G$ be a group with subgroup $\H$, and let $\R$ be a commutative ring with $1$.
 Then $\H$ is finitely generated if and only if $\langle -1+h \setsuchthat h\in H\rangle$ is a finitely generated submodule of the
 regular $\R[\G]$-module. 
\end{lem}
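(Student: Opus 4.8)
The plan is to work inside the group ring $\R[\G]$, writing $1$ for the group identity, and to analyse the left ideal $I_\H := \langle -1+h \setsuchthat h\in\H\rangle$, which is exactly the submodule of the regular $\R[\G]$-module appearing in the statement. Everything rests on two elementary identities, valid for all $a,b\in\G$:
\[ ab-1 = a(b-1)+(a-1), \qquad a^{-1}-1 = -a^{-1}(a-1). \]
Throughout I assume $\R$ is nontrivial (i.e.\ $1\neq 0$), which is needed for the ``only if'' direction and is harmless in the intended applications.

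First I would prove that if $\H$ is finitely generated then $I_\H$ is. Suppose $\H = \langle h_1,\dots,h_n\rangle$ and let $J := \langle h_1-1,\dots,h_n-1\rangle$ be the left ideal generated by these $n$ elements. Consider the set $T := \{ a\in\G \setsuchthat a-1\in J\}$. Since $J$ is a left ideal, the two displayed identities show at once that $T$ is closed under products and inverses; as $1\in T$ and each $h_i\in T$, we conclude $T\supseteq\langle h_1,\dots,h_n\rangle=\H$. Hence $h-1\in J$ for every $h\in\H$, so $I_\H = J$ is finitely generated as an $\R[\G]$-module.

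For the converse I would first record the structural fact that underpins everything: the $\R$-linear map $\phi\colon \R[\G]\to\R[\G/\H]$ onto the free $\R$-module on the set of right cosets $\{g\H\}$, sending $g\mapsto g\H$, is a homomorphism of left $\R[\G]$-modules (for the left action $g'\cdot(g\H)=(g'g)\H$) with $\ker\phi = I_\H$. The inclusion $I_\H\subseteq\ker\phi$ is immediate, and the reverse follows because modulo $I_\H$ every $g$ reduces to a fixed representative of its coset via $g-g_C = g_C(h-1)\in I_\H$ when $g=g_C h$. Now assume $I_\H$ is finitely generated, say by $x_1,\dots,x_m$. Each $x_k$ is an $\R[\G]$-combination of finitely many generators $h-1$, so altogether only finitely many elements of $\H$ occur; let $\H_0$ be the finitely generated subgroup they generate. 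Then $x_1,\dots,x_m\in I_{\H_0}\subseteq I_\H$, and since the $x_k$ generate $I_\H$ we get $I_{\H_0}=I_\H$. Finally, for any $h\in\H$ we have $h-1\in I_\H=I_{\H_0}=\ker\phi_{\H_0}$, whence $h\H_0=\H_0$ in the free $\R$-module on right cosets of $\H_0$; as distinct cosets are distinct basis vectors (this is where $1\neq 0$ is used), this forces $h\in\H_0$. Thus $\H=\H_0$ is finitely generated.

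The only genuine content beyond formal manipulation is the identification $\ker\phi=I_\H$ together with the deduction $I_{\H_0}=I_\H\Rightarrow\H_0=\H$; I expect verifying $\ker\phi\subseteq I_\H$ (the coset-reduction step) to be the main point requiring care, while the forward direction and the finiteness bookkeeping in the converse are routine.
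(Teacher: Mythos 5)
Your proof is correct, but there is nothing in the paper to compare it against: the authors explicitly label Lemma~\ref{lem:MX} a ``folklore fact, which we give without proof.'' What you have written is the standard argument, carried out in full: the forward direction via the identities $ab-1=a(b-1)+(a-1)$ and $a^{-1}-1=-a^{-1}(a-1)$, showing that $T=\{a\in\G : a-1\in J\}$ is a subgroup containing the generators of $\H$; and the converse via the identification of $I_\H$ (the left ideal generated by $\{h-1 : h\in\H\}$, i.e.\ $\R[\G]\,\omega(\H)$ for $\omega(\H)$ the augmentation ideal of $\R[\H]$) with the kernel of the permutation-module map $\R[\G]\to\R[\G/\H]$, followed by the finiteness bookkeeping that replaces $\H$ by the subgroup $\H_0$ generated by the finitely many group elements occurring in a finite generating set of $I_\H$. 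Both halves check out, including the coset-reduction step $g-g_C=g_C(h-1)$ establishing $\ker\phi\subseteq I_\H$, and the observation that distinct cosets are distinct basis vectors, which is where your (necessary) hypothesis $1\neq 0$ in $\R$ enters; the lemma is indeed false over the zero ring, so flagging this is a genuine point of care rather than pedantry.

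Two minor remarks. First, the cosets $\{g\H\}$ on which $\G$ acts by $g'\cdot(g\H)=(g'g)\H$ are \emph{left} cosets in the usual modern terminology; your calling them right cosets is only a naming slip and does not affect the argument. Second, you work with the left regular module and left ideals, whereas the paper's application in Example~\ref{exa:pairsfg1} uses the right $\Z[\G]$-module structure (coefficients $c_{ij}$ act on the right); the two versions are interchanged by the anti-automorphism $g\mapsto g^{-1}$ of $\R[\G]$, or by simply mirroring your identities, so this is harmless but worth a sentence if the proof were to be inserted into the paper.
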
  

 We now give an example of a non-finitely generated subdirect product of three groups where the projection
 onto any two factors is finitely generated.
 
\begin{exa}  \label{exa:pairsfg1}
 Let $\F$ denote the free group on $\{x,y,z\}$, and let
 \[ \C_0 := \bigl\langle (x,x,x),(y,y,y),(z,z,z),(1,z,z^{-1}),(1,1,z^3) \bigr\rangle. \]
 Clearly $\C_0 \leq \F\times F\times F$ is subdirect. Let $\ZZ$ be the normal subgroup of $\F$ generated by $z$.
 It is straightforward to check that for all $1\leq i<j\leq 3$ we have
 \[ \pi_{ij}(\C_0) = \bigl\langle (x,x),(y,y),(z,z),(1,z) \bigr\rangle = \bigl\{ (a,b)\in\F\times F \setsuchthat a \equiv_\ZZ b \bigr\} =: \B. \]
 For $i\leq 3$, let $N_i$ be the kernel of the projection of $\C_0$ on the $i$-th component. Then
\begin{align*}
 N_1 & \leqsd 1\times \ZZ \times \ZZ, \\
 N_2 & \leqsd \ZZ\times 1 \times \ZZ, \\
 N_3 & \leqsd \ZZ\times \ZZ \times 1.
\end{align*}
 Theorem~\ref{thm:mult}\ref{it:mcom} (or simply computing the commutators $[N_i,N_j]$) yields
 $\ZZ'\times \ZZ'\times \ZZ' \leq \C_0$.

Let $\G:=\langle x,y\rangle\leq \F$, and let 
$\H := \bigl\langle [x^i,y^j] \setsuchthat i,j\in\ZZ\bigr\rangle\leq \F$.
A standard argument via Nielsen reduced sets,
shows that $\H$ is not finitely generated.
Let $\M := [\ZZ,\H]\ZZ^\prime\unlhd \F$.
 We claim that 
\[ \C := \C_0(1\times 1\times \M) \]
 is not finitely generated even though all the projection of $\C$ onto $2$ factors  still equal $\B$ (since $\M\leq \ZZ$)
 and are finitely generated. 

Suppose, aiming for contradiction, that $\C$ is finitely generated.
Consider $\C$ as a subdirect product of $\F$ and $\B$.
Since $\C$ is a semidirect product of $\bigl\langle (x,x,x),(y,y,y),(z,z,z) \bigr\rangle$ and $\ker\pi_\F$,
it follows that the factor kernel $\L:=\pi_\B(\ker\pi_\F)\unlhd\B$ is normally generated by
$\bigl\{(z,z^{-1}),(1,z^3)\bigr\}\cup (1\times M)$.
Since $\F$ is free, and hence finitely presented, and since $\C$ is assumed to be finitely generated,  Proposition~\ref{pro:lBfg}\ref{it:lBfg1} implies that $\L$ must be finitely normally generated.

Next, note that $\ZZ$ is a free group with basis $\{z^g\setsuchthat g\in G\}$.
Let $\overline{\ZZ}:=\ZZ/\ZZ^\prime$, the abelianization of $\ZZ$,
and let $x\mapsto\overline{x}$ be the natural epimorphism $\ZZ\rightarrow \overline{\ZZ}$.
 Consider $\overline{\ZZ}$ as a (right) $\Z[\G]$-module where $\G$ acts on $\overline{\ZZ}$ by conjugation and $\Z$ acts by exponentiation.
 In fact $\overline{\ZZ}$ is the free $\Z[\G]$-module over $\overline{z}$.

Since $\L$ is a finitely generated normal subgroup of $\B$, it follows that $\overline{\L}$ modulo $\ZZ^\prime\times\ZZ^\prime$ is a finitely generated $\Z[\G]$-submodule of $\overline{\ZZ}\times\overline{\ZZ}$.
 Let $\overline{v}_{ij}:=\overline{ \bigl[z,[x^i,y^j]\bigr]}$ for $i,j\in\Z$ be the generators of $\overline{\M}$. 
Then there must exist $k\in\N$ such that 
for all $r,s\in \Z$ we have
\[ (\overline{1},\overline{v}_{r,s}) = (\overline{z},\overline{z}^{-1})^a (\overline{1},\overline{z}^3)^b\ \prod_{|i|,|j|< k}(\overline{1},\overline{v}_{ij})^{c_{ij}} \]
for some $a,b,c_{ij}\in\Z[\G]$.
 Note that $\overline{z}^a=\overline{1}$ implies $(\overline{z}^{-1})^a=\overline{1}$ and hence we have
 $\overline{v}_{r,s} = (\overline{z}^3)^b\ \prod_{|i|,|j|< k}\overline{v}_{ij}^{c_{ij}}$.
 Considering this equation modulo $3$ (i.e., modulo $N := \langle a^3\setsuchthat a\in Z\rangle Z'$) yields
\[ \overline{v}_{r,s} \equiv \prod_{|i|,|j|< k} \overline{v}_{ij}^{c_{ij}} \mod N. \]
 Equivalently, in the regular $\Z_3[\G]$-module, the 
element 
$\overline{v}_{ij}=\overline{z}^{-1}\overline{z}^{[x^i,y^j]}$ corresponds to
$-1+[x^i,y^j]$ and so we have
\[ -1+[x^r,y^s] = \sum_{|i|,|j|< k}\bigl(-1+[x^i,y^j]\bigr) c_{ij}. \]
 Hence the submodule $\langle -1+h \setsuchthat h\in H \rangle$
 is finitely generated, contradicting that the group $\H$ is not finitely generated via Lemma~\ref{lem:MX}. 
\end{exa}

 The next example shows that Theorem~\ref{thm:3groups} does not generalize to an arbitrary number of factors even
 under the stronger assumption that the projection onto any two components is a subdirect product over an abelian
 quotient.

\begin{exa}
 Let $F$ be the free group over $2$ variables, and let
\begin{multline*}
\C_0 := \bigl\langle (a,a,a,a,a,a,a,a),(1,b,1,b,1,b,1,b),\\
(1,1,c,c,1,1,c,c),
(1,1,1,1,d,d,d,d) \setsuchthat
 a\in F,\ b,c,d\in F'\bigr\rangle.
\end{multline*}
 Note that $\C_0$ is isomorphic to $M_\F(1',1',1')$, $\C_0\leqsd\F^8$, and the projection of $\C_0$ onto any two factors
 is $\bigl\{(u,v)\in F\times F\setsuchthat u \equiv v\mod F' \bigr\}$, in particular, a subdirect product over an abelian quotient. 

 By~\cite[Theorem 6]{Ha:FCSG}
 we have a normal subgroup $N$ of $\F$ with $\bigl[[F',F'],F'\bigr] \subseteq N \subseteq [F',F']$ such that $N/\bigl[[F',F'],F'\bigr]$
 is not finitely generated as a normal subgroup. Then $\C_0 (1\times\dots\times 1\times N)$ is not finitely generated
 by an argument that is similar to the one in Example~\ref{exa:pairsfg1}.
\end{exa}

We now briefly turn to loops. Note that they form a congruence permutable variety (when considered as algebras with three binary operations and one constant), and so the results of Section \ref{secFGPK} all apply to them.
However, 
the analogue of Theorem \ref{thm:3groups} does not hold for loops.
In fact, even more strikingly, abelian quotient is not a sufficient condition for finite generation even for two factors, as the following example shows.

\begin{exa}
 Let $\L$ be the free loop over $x$, and let $1$ be the total congruence on $\L$. Note that the abelianization
 $\L/[1,1]$ is isomorphic to the cyclic group $\Z$.
 So we may consider the commutator $[1,1]$ as a subdirect product of $\L\times\L$ over the abelian quotient $\Z$.
 However, $\Z$ is not a finitely presented loop,
 which routinely follows from Evans' solution to the word problem in any finitely presented loop \cite{Ev:LSV}.
 Hence $[1,1]$ is not finitely generated as a congruence of $\L$, and consequently cannot be finitely generated when
 considered as a loop in its own right.
\end{exa}

 Nonetheless, we are able to derive a finite generation result in the case of three factors, and subdirect product
 that is (fully) surjective on pairs as a consequence of Corollary~\ref{cor:vs2}.

\begin{cor} \label{cor:3loops}
 Let $\C\leqsd \A_1\times\A_2\times\A_3$ be a subdirect product of finitely generated loops with surjective projections
 on all pairs. Then $\C$ is finitely generated.
\end{cor}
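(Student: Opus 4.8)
The plan is to obtain the corollary as a direct application of Corollary~\ref{cor:vs2} with $n=3$ and $\vV$ the variety of loops, so the work splits into checking the hypotheses of that corollary. The structural hypotheses are immediate: surjectivity on pairs gives $\pi_{ij}(\C)=\A_i\times\A_j$ for all $i<j$, so in particular $\pi_{12}(\C)=\A_1\times\A_2$ is finitely generated (finite generation being preserved under direct products in congruence permutable varieties, \cite[Theorem 2.2]{MR:FPD}), while $\pi_{13}$ and $\pi_{23}$ are fully surjective and hence \emph{a fortiori} virtually surjective, their factor kernels being the total congruences and the corresponding quotients one-element.

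What remains is the variety-theoretic assumption of Corollary~\ref{cor:vs2} in the case $n-2=1$: that every congruence of a finitely generated, virtually supernilpotent loop of class at most $1$ is finitely generated. Since supernilpotence of class at most $1$ is abelianness, and the class-$0$ (finite) case is trivial, this reduces to showing that a finitely generated \emph{virtually abelian} loop $\A$ is Noetherian, i.e.\ $\Con(\A)$ satisfies the ascending chain condition (whence every congruence is compact, equivalently finitely generated). So fix an abelian congruence $\alpha$ of $\A$ (that is, $[\alpha,\alpha]=0$) with $\A/\alpha$ finite.

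I would analyse $\Con(\A)$ through the two intervals determined by $\alpha$. The upper interval $[\alpha,1_\A]\cong\Con(\A/\alpha)$ is finite. For the lower interval $[0_\A,\alpha]$ I would use that an abelian algebra in a Mal'cev variety is affine; for loops this means the block $N:=1/\alpha$, being an abelian loop, is an abelian group, and since $N$ is a finite-index subloop of the finitely generated loop $\A$ it is itself finitely generated, hence a Noetherian abelian group. As congruences below $\alpha$ correspond to normal subloops contained in $N$, the interval $[0_\A,\alpha]$ order-embeds into the (Noetherian) subgroup lattice of $N$ and therefore satisfies the ascending chain condition. Finally, using congruence modularity of loops, a short modular-law computation shows that $\beta\mapsto(\beta\meet\alpha,\beta\join\alpha)$ is injective on comparable congruences; since its target $[0_\A,\alpha]\times[\alpha,1_\A]$ satisfies the ascending chain condition, so does $\Con(\A)$, which is exactly what the hypothesis of Corollary~\ref{cor:vs2} requires.

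I expect the genuine content to lie in this last verification rather than in the application of Corollary~\ref{cor:vs2}. The two potentially delicate points are establishing that the abelian part is a \emph{finitely generated} abelian group --- which rests on the fact that finite-index subalgebras of finitely generated algebras in a Mal'cev variety are again finitely generated --- and the modularity argument that lets arbitrary congruences be controlled by their meet and join with $\alpha$. Once these are in place the ascending chain condition, and hence finite generation of every congruence, follows without further computation.
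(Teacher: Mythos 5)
Your overall route---deducing the corollary from Corollary~\ref{cor:vs2} with $n=3$ and $\vV$ the variety of loops---is exactly the paper's, and your checks of the structural hypotheses are fine: full surjectivity gives $\pi_{12}(\C)=\A_1\times\A_2$, which is finitely generated by \cite[Theorem 2.2]{MR:FPD}, and makes the other projections trivially virtually surjective. The gap lies in your verification of the variety-level hypothesis. You attempt it for all finitely generated \emph{virtually} abelian loops, and the pivotal step is the assertion that $N=1/\alpha$, being a finite-index normal subloop of the finitely generated loop $\A$, is itself finitely generated. This is not an available fact: there is no analogue of Schreier's lemma (Reidemeister--Schreier) for loops, nor for Mal'cev varieties in general, and neither this paper nor its references provides one. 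A telling red flag: if that step worked, your argument would verify the hypothesis of Corollary~\ref{cor:vs2} in full and would therefore prove the corollary with surjectivity on pairs weakened to \emph{virtual} surjectivity---precisely the question the paper declares open immediately after Corollary~\ref{cor:3loops}. So the finite-index claim is the unresolved obstruction in this subject, not a routine verification, and your proof as written does not close it.

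Fortunately, the statement as given does not need the virtually abelian case. With fully surjective projections every $\lambda_{ij}=\pi_j(\ker\pi_i\vee\ker\pi_j)$ is the total congruence, so in the proof of Corollary~\ref{cor:vs2} the only quotient that arises is $\A_3/\gamma_3$ with $\gamma_3=[1,1]$, which is genuinely abelian (the finite quotient witnessing virtual supernilpotence is the one-element algebra, i.e.\ $\alpha=1_\A$). The paper's proof thus consists of the single observation that an abelian loop is an abelian group \cite{SV:CTL}, so a finitely generated abelian loop is a finitely generated abelian group, hence Noetherian, hence has all congruences finitely generated. The remaining ingredients of your argument are sound---that $[\alpha,\alpha]=0$ forces the class $N=1/\alpha$ to be abelian as a loop (the term condition passes to subalgebras), the correspondence of congruences below $\alpha$ with subgroups of $N$, and the modular-lattice interleaving of $[0_\A,\alpha]$ and $[\alpha,1_\A]$ via $\beta\mapsto(\beta\meet\alpha,\beta\join\alpha)$---but when $\alpha=1_\A$ all of this collapses to the paper's one-line observation. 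Restrict your verification to the abelian case and your proof becomes correct, and identical to the paper's.
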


\begin{proof}
 Any abelian (i.e., (super)nilpotent of class $1$) loop is an abelian group~\cite[p. 295, last paragraph]{SV:CTL}. 
 In particular finitely generated abelian
 loops have finitely generated congruences. Hence the result follows from Corollary~\ref{cor:vs2}.
\end{proof}

 We do not know whether Corollary~\ref{cor:3loops} generalizes to more than three factors, or
 whether surjectivity on pairs can be relaxed to virtual surjectivity.

\section{Modules and $\K$-algebras}
\label{sec:Kalgebras}

The case of modules is interesting because the behaviour of their subdirect products radically depends on the nature
of the underlying ring.
Over Noetherian rings this behaviour is as nice as one could hope for, as reflected in the following two results. 

\begin{lem} \label{lem:mod}
 A ring $\R$ is right Noetherian if and only if every subdirect product of two finitely generated right $\R$-modules is
 finitely generated.
\end{lem}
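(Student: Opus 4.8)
The statement is an equivalence, and I would treat the two implications separately, since they are of quite different character.

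For the forward implication, assume $\R$ is right Noetherian. The plan is to invoke the classical fact that over a right Noetherian ring every submodule of a finitely generated right module is again finitely generated. If $\A$ and $\B$ are finitely generated right $\R$-modules, then so is their direct product $\A\times\B$, and any subdirect product $\C\leqsd\A\times\B$ is in particular a submodule of $\A\times\B$; hence $\C$ is finitely generated. No use of the subdirect structure is needed here, and indeed the conclusion holds for every submodule.

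For the reverse implication I would show that the hypothesis forces every right ideal of $\R$ to be finitely generated, which is exactly right Noetherianity. Let $I$ be an arbitrary right ideal of $\R$. Viewing $\R$ as the regular right module (finitely generated by $1$), I form the congruence associated with $I$ as a fiber product, following the remark in Section~\ref{sec:prelim} that every congruence is a fiber product over the corresponding quotient:
\[ \C := \{ (a,b)\in \R\times\R \setsuchthat a-b\in I \}. \]
Then $\C\leqsd\R\times\R$ (it contains the diagonal, so both projections are onto), with both factor kernels equal to $I$ and quotient $\R/I$. Since $\R$ is finitely generated, the hypothesis guarantees that $\C$ is finitely generated. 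It then remains to deduce that $I$ itself is finitely generated. The key observation is the internal direct decomposition $\C = \Delta \oplus (I\times 0)$, where $\Delta = \{(a,a)\setsuchthat a\in\R\}$: every $(a,b)\in\C$ is written uniquely as $(b,b)+(a-b,0)$ with $a-b\in I$, and $\Delta\cap(I\times 0)=0$. Consequently $\C\cong\R\oplus I$ as right $\R$-modules, so $I$ is a direct summand, hence a homomorphic image, of the finitely generated module $\C$; therefore $I$ is finitely generated. As $I$ was arbitrary, $\R$ is right Noetherian. (Alternatively, the same conclusion follows from Corollary~\ref{cor:ABfp}: the regular module $\R$ is free of rank one and hence finitely presented, so $\C$ finitely generated forces $\R/I\cong\A/\lambda_\A$ to be finitely presented, which for modules is equivalent to $I$ being finitely generated.)

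The forward direction is routine. The substance lies in the reverse direction, and specifically in the passage from finite generation of the subdirect product $\C$ to finite generation of the ideal $I$: one must exhibit a subdirect product that actually detects failure of finite generation of $I$. The fiber product of $\R$ with itself over $\R/I$ is the natural candidate, and the decomposition $\C\cong\R\oplus I$ makes the extraction transparent; the only point requiring care is to confirm that this $\C$ is genuinely a \emph{subdirect} product of two finitely generated modules, so that the hypothesis applies, which is immediate from the presence of the diagonal.
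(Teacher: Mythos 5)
Your proof is correct and takes essentially the same approach as the paper: the forward direction via the fact that submodules of finitely generated modules over right Noetherian rings are finitely generated, and the reverse direction via the fiber product $\C=\{(a,b)\in R\times R \setsuchthat a-b\in I\}$ of the regular module with itself over $R/I$, which the paper writes as $M=\{(a,a+n)\setsuchthat a\in R,\ n\in N\}$ and likewise identifies as $R\times N$ (your decomposition $\C=\Delta\oplus(I\times 0)$ just verifies that isomorphism explicitly). The only cosmetic difference is that you argue the reverse implication directly for an arbitrary right ideal, whereas the paper states it contrapositively starting from a non-finitely-generated right ideal.
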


\begin{proof}
 First let $\A,\B$ be finitely generated modules over a right Noetherian ring $\R$.
 Clearly $\A\times\B$ is finitely generated. Since all submodules of finitely
 generated modules over right Noetherian rings are finitely generated~\cite[Proposition 1.1.12]{HG:ARM},
 in particular all subdirect products of $\A$ and $\B$ are finitely generated.

 Next let $\R$ be a ring that is not right Noetherian. Then it contains a right ideal $N$ that is not finitely
 generated. Consider the right $\R$-module
\[ M := \bigl\{ (a,a+n) \setsuchthat a\in R,\ n\in N \bigr\}. \]
 Then $M$ is a subdirect product of the regular right $\R$-module $R$ with itself. Note that $R$ is $1$-generated
 as a module but that $M\cong R\times N$ is not finitely generated.
\end{proof}

 As an immediate consequence of the previous lemma we obtain the following.

\begin{cor} \label{cor:mod}
 Every subdirect product of finitely many finitely generated right modules over a right Noetherian ring
 is finitely generated.
\end{cor}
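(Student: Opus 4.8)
The plan is to reduce to the two-factor case treated in Lemma~\ref{lem:mod} by induction on the number $n$ of factors. For $n\leq 2$ the statement is immediate: for $n=1$ trivially, and for $n=2$ it is exactly the forward direction of Lemma~\ref{lem:mod}.

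For the inductive step, suppose $n\geq 3$ and let $\C\leqsd\A_1\times\dots\times\A_n$ with each $\A_i$ finitely generated. Set $I=\{1,\dots,n-1\}$ and $\B:=\pi_I(\C)$. First I would recall from the discussion of multiple factors in Section~\ref{sec:prelim} that $\B\leqsd\A_1\times\dots\times\A_{n-1}$, so by the induction hypothesis $\B$ is finitely generated. Next I would verify that $\C$, viewed inside $\B\times\A_n$, is again a subdirect product: the projection onto $\B$ is surjective by the very definition of $\B$, and the projection onto $\A_n$ is surjective because $\C$ is subdirect in the original product. Applying Lemma~\ref{lem:mod} to the two finitely generated modules $\B$ and $\A_n$ then yields that $\C$ is finitely generated, completing the induction.

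Alternatively, one can bypass the induction altogether and argue directly: the module $\A_1\times\dots\times\A_n$ is finitely generated as a direct product of finitely many finitely generated modules, and $\C$ is a submodule of it; since every submodule of a finitely generated module over a right Noetherian ring is finitely generated~\cite[Proposition 1.1.12]{HG:ARM} (the very fact already invoked in the proof of Lemma~\ref{lem:mod}), it follows at once that $\C$ is finitely generated. There is essentially no obstacle here; for the inductive route the only points requiring (routine) checking are that the partial projection $\B$ is itself a subdirect product and that $\C$ remains subdirect in $\B\times\A_n$, both of which are immediate from the definitions, while the direct route makes the corollary genuinely ``immediate'' from the Noetherian hypothesis.
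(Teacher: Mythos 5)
Your proposal is correct, and your second (direct) argument is exactly what the paper has in mind: the corollary is stated there without proof as an ``immediate consequence'' of Lemma~\ref{lem:mod}, whose forward direction is proved by precisely this observation that $\C$ is a submodule of the finitely generated module $\A_1\times\dots\times\A_n$ and submodules of finitely generated modules over right Noetherian rings are finitely generated. Your inductive route is also valid (the two subdirectness checks you flag are indeed routine), but it is an unnecessary detour given that the Noetherian hypothesis handles all submodules at once, subdirect or not.
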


Over non-Noetherian rings the behaviour is more in line with the general congruence permutable varieties.
We begin with an example of a finitely presented subdirect product of two non-finitely presented modules.

\begin{exa} \label{exa:Fxymod}
 Let $\R$ be the free non-commutative ring with $1$ over $x,y$, and let $\M$ be the regular right
 module of $\R$ over itself, i.e., $\M$ is the free cyclic right $\R$-module. Consider its submodules
\begin{align*}
 K & := \langle x^iy \setsuchthat i\in\N \rangle, \\
 L & := \langle y^ix \setsuchthat i\in\N \rangle.
\end{align*}  
 Then $K\cap L = 0$, and $M$ is a subdirect product of $\M/K\times \M/L$.
 However the class of $0$ modulo $\lambda_{\M/K}$ is 
\[ K+L/K \cong L/K\cap L \cong L, \]
 hence clearly not finitely generated. Similarly $\lambda_{\M/L}$
 is not finitely generated.
\end{exa}

Perhaps still more surprising are the subdirect products of several factors,
where even surjectivity on all but one components is not sufficient to guarantee finite generation,
as the following example shows.

\begin{exa} \label{exa:pairsfg}
 For $R,M$, and $K$ as in Example~\ref{exa:Fxymod}, let
\[ \C := \{(a_1,\dots,a_n)\in M^n \setsuchthat \sum_{i=1}^n a_i \in K \} \]
 is a subdirect product of $\M^n$ that is surjective on any $n-1$ components. However
\[ \C \cong \M^{n-1}\times \K \]
 is not finitely generated.
\end{exa}

 We close our considerations of modules with the following  explicit connection between a subdirect product of modules in the full direct product and
 the fiber construction, which will be of use subsequently.

\begin{lem} \label{le:modcf}
 Let $\R$ be a ring, let $g\colon\A\to\D, h\colon\B\to\D$ be onto homomorphisms for $\R$-modules $\A,\B,\D$,
 and let $\C := \bigl\{ (a,b)\in A\times B\setsuchthat g(a)=h(b) \bigr\}$ be a fiber
product of $\A$ and $\B$ over $\D$. Then
\[ \A\times\B/\C \cong \D. \] 
\end{lem}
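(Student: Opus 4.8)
The plan is to show $\A\times\B/\C\cong\D$ by exhibiting a surjective homomorphism from $\A\times\B$ onto $\D$ whose kernel is exactly $\C$, and then invoking the first isomorphism theorem for modules. The natural candidate is the map
\[
\varphi\colon \A\times\B\to\D,\qquad (a,b)\mapsto g(a)-h(b).
\]
First I would verify that $\varphi$ is an $\R$-module homomorphism; this is immediate since $g$ and $h$ are both homomorphisms and subtraction is bilinear over $\R$, so $\varphi((a_1,b_1)+(a_2,b_2))=\varphi(a_1,b_1)+\varphi(a_2,b_2)$ and $\varphi((a,b)r)=\varphi(a,b)r$ follow directly.

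Next I would check that $\varphi$ is onto. Given any $d\in D$, surjectivity of $g$ furnishes some $a\in A$ with $g(a)=d$; taking $b=0$ gives $\varphi(a,0)=g(a)-h(0)=d$. Hence $\varphi$ is surjective. (Surjectivity of $h$ alone would serve equally, by a symmetric choice.)

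The final and central step is to identify the kernel. By definition $(a,b)\in\ker\varphi$ if and only if $g(a)-h(b)=0$, i.e. $g(a)=h(b)$, which is precisely the defining condition for $(a,b)\in C$. Thus $\ker\varphi=\C$, and the first isomorphism theorem yields $\A\times\B/\C\cong\D$. I do not anticipate any genuine obstacle here: the only point requiring the slightest care is that we use $g(a)-h(b)$ rather than, say, $(g(a),h(b))$, so that the kernel collapses to the equalizer defining the fiber product rather than to a product; the key structural input is simply that in a module (unlike a general Mal'cev algebra) the difference $g(a)-h(b)$ makes sense and captures the equality $g(a)=h(b)$ as membership in a single congruence class.
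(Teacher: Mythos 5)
Your proof is correct and takes essentially the same approach as the paper: the paper considers the surjection $f\colon \A\times\B\to\D\times\D$, $(a,b)\mapsto \bigl(g(a),h(b)\bigr)$, observes that $C=f^{-1}(\Delta)$ for the diagonal submodule $\Delta=\{(d,d)\setsuchthat d\in D\}$, and applies the Homomorphism Theorem to get $\A\times\B/\C\cong\D\times\D/\Delta\cong\D$, and your map $(a,b)\mapsto g(a)-h(b)$ is exactly $f$ composed with the isomorphism $\D\times\D/\Delta\cong\D$, so the two arguments coincide. (One negligible wording slip: subtraction is additive and $\R$-linear, not ``bilinear''; the verification itself is fine.)
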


\begin{proof}
 Note that the homomorphism
\[ f\colon \A\times\B \to \D\times\D, (a,b)\mapsto \bigl(g(a),h(b)\bigr), \]
 is onto. Let $\Delta := \{(d,d)\setsuchthat d\in D\}$ be a submodule of $\D^2$. Then $C = f^{-1}(\Delta)$
 and the Homomorphism Theorem yields
\[ \A\times\B/\C \cong \D\times\D/\Delta \cong \D. \]
\end{proof}

We now turn our attention to rings, and begin with an example of a finitely presented subdirect product of two non-finitely presented factors.

\begin{exa}
\label{exa:fprings}
Let $\F=\Z\langle x,y\rangle$ be the free ring with $1$ over two non-commuting variables. Let $\I$ and $\J$ be the ideals of $\F$ generated by the sets 
\[
\{xy^ix\setsuchthat i\in\N\}\cup\{ x^2y^2,y^2x^2,yxy\}
\text{ and }
\{yx^iy\setsuchthat i\in\N\}\cup\{ y^2x^2,x^2y^2,xyx\}
\]
respectively. 
It is clear that neither of them is finitely generated. We will prove that the intersection $I\cap J$ is finitely generated.
Indeed, it is easy to see that the only monomials that do not belong to $I\cap J$ are
$x^i$, $y^i$, $xy^i$, $y^i x$, $x^i y$, $yx^i$ (for $i\in\N$),
and $xy^ix$, $yx^iy$ (for $i\geq 2$).
It then follows that $I\cap J$ is generated by the finite set
$\{ x^2y^2,y^2x^2,xyx,yxy\}$, as required.
\end{exa}

Next we prove the full analogue of the property \ref{it:G4} for groups. For this we work in the more general setting of $\K$-algebras.

For $\K$ a commutative ring with $1$, a \emph{$\K$-algebra} is a structure $\A$ which is simultaneously a ring
(not necessarily commutative or with $1$) and a $\K$-module, such that the ring multiplication is $\K$-bilinear. Clearly, the notion of $\K$-algebras encompasses those of rings ($\K=\Z$) and classical algebras ($\K$ a field).

For $\K$-algebras with $\K$ Noetherian we will be able to prove the full analogue of
 the result for groups \ref{it:G4} by Bridson et al.~\cite{BHM:FPSP}.
In fact, we will be able to relax the condition of virtual surjectivity on pairs to that of \emph{finite co-rank},
defined as follows.
If $N$ is a submodule of $M$ and the quotient $M/N$ is finitely generated we say that $N$ has \emph{finite co-rank}
 in $M$.
 Finite co-rank plays the role for $\K$-algebra analogous to that of finite index for groups as demonstrated in~\cite{MR:PSS}.
 There it was proved, for a $\K$-algebra $\A$ over a Noetherian commutative ring $\K$ with $1$ and a subalgebra $\B$ of finite co-rank, that $\A$ is finitely generated (resp. finitely presented) if and only if $\B$ is finitely generated (resp. finitely presented).

\begin{lem}
\label{lem:Kalgebra}
Let $\K$ be a commutative Noetherian ring with $1$, $n\geq 2$, and $\C\leqsd\A_1\times\dots\times\A_n$ a subdirect
 product of finitely generated $\K$-algebras $\A_1,\dots,\A_n$. Then
$\C$ has finite co-rank in $\A_1\times\dots\times\A_n$
if and only if
$\pi_{ij}(\C)$  has finite co-rank in $\A_i\times\A_j$ for all $1\leq i<j\leq n$.
\end{lem}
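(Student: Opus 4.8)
The plan is to prove both directions using the module-theoretic language, exploiting that $\K$-algebra congruences are ideals and that ``finite co-rank'' means the quotient module is finitely generated over the Noetherian ring $\K$.

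For the forward direction, I would argue that finite co-rank is inherited by projections. Suppose $\C$ has finite co-rank in $\A_1\times\dots\times\A_n$, so the quotient $\K$-module $(\A_1\times\dots\times\A_n)/\C$ is finitely generated. Fix $i<j$. The projection $\pi_{ij}\colon \A_1\times\dots\times\A_n \to \A_i\times\A_j$ is a surjective $\K$-module homomorphism carrying $\C$ onto $\pi_{ij}(\C)$, and it therefore induces a surjection of $\K$-modules
\[
(\A_1\times\dots\times\A_n)/\C \longrightarrow (\A_i\times\A_j)/\pi_{ij}(\C).
\]
Since the left-hand side is finitely generated, so is the image, giving that $\pi_{ij}(\C)$ has finite co-rank in $\A_i\times\A_j$. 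This direction is the easy one and uses only surjectivity of the induced map on quotients.

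The converse is the substantive direction. Assume each $\pi_{ij}(\C)$ has finite co-rank in $\A_i\times\A_j$; I must deduce that $\C$ itself has finite co-rank. The natural strategy is to connect the co-rank of $\C$ with the congruences $\gamma_j$ appearing in Theorem \ref{thm:mult}, and to translate the commutator computation into module language. Concretely, finite co-rank of $\pi_{ij}(\C)$ means (by Lemma \ref{le:modcf} applied to $\pi_{ij}(\C)$ as a fiber product of $\A_i$ and $\A_j$) that the common quotient $\A_j/\lambda_{ij}$ is a finitely generated $\K$-module, i.e. each factor kernel $\lambda_{ij}$ has finite co-rank in $\A_j$. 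For rings and $\K$-algebras the commutator of ideals is the sum of products over all permutations (as recorded in the excerpt), so $\gamma_j=[\lambda_{1j},\dots,\lambda_{j-1,j},\lambda_{j+1,j},\dots,\lambda_{nj}]$ is built from finitely-co-rank ideals; I would verify that a product (hence sum of products) of ideals each of finite co-rank again has finite co-rank, so that each $\A_j/\gamma_j$ is a finitely generated $\K$-module. Combined with Theorem \ref{thm:mult}\ref{it:mcom}, which gives that $\C$ is a union of $\gamma_1\times\dots\times\gamma_n$-classes, this exhibits a finitely generated $\K$-module $(\A_1\times\dots\times\A_n)/(\gamma_1\times\dots\times\gamma_n)$ through which the quotient $(\A_1\times\dots\times\A_n)/\C$ factors as a quotient; Noetherianity of $\K$ then forces $(\A_1\times\dots\times\A_n)/\C$ to be finitely generated.

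The main obstacle I anticipate is the commutator step: showing that $\A_j/\gamma_j$ is a finitely generated $\K$-module from the finite co-rank of the individual $\lambda_{ij}$. This requires checking that the ideal-theoretic product $\lambda_{1j}\cdots\lambda_{nj}$ (and the finitely many permuted products summing to $\gamma_j$) has finite co-rank whenever each factor does — i.e. that finite co-rank is closed under ideal products in a $\K$-algebra over a Noetherian $\K$. Here I would use that if $M/N$ is a finitely generated $\K$-module then $N$ absorbs ``most'' of $M$ under multiplication, and that a finite product of such ideals leaves only a finitely generated $\K$-module quotient; Noetherianity of $\K$ guarantees submodules and quotients of finitely generated modules stay finitely generated, which is exactly what keeps the bookkeeping finite. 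Once each $\A_j/\gamma_j$ is controlled, the passage from the union-of-classes statement to finite co-rank of $\C$ is routine via the short exact sequence of $\K$-modules relating $\C$, the product, and the product quotient modulo $\gamma_1\times\dots\times\gamma_n$.
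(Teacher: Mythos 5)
Your architecture matches the paper's proof: the forward direction via the induced surjection on quotient modules, and the backward direction by (i) using Lemma~\ref{le:modcf} to convert finite co-rank of $\pi_{ij}(\C)$ into finite co-rank of the factor kernels $\lambda_{ij}$ in $\A_j$, (ii) arguing that the relevant ideal products then have finite co-rank, and (iii) concluding via Theorem~\ref{thm:mult}\ref{it:mcom} that $\gamma_1\times\dots\times\gamma_n\leq\C$, so that $(\A_1\times\dots\times\A_n)/\C$ is a quotient of a finitely generated $\K$-module. (Your use of the full commutator $\gamma_j$ where the paper uses the single product $J_j:=\prod_{i\neq j}I_{ij}\subseteq\gamma_j$ is a harmless variation.) However, there is a genuine gap at exactly the step you flag as the ``main obstacle'': your asserted lemma, that finite co-rank is closed under ideal products in a $\K$-algebra over a Noetherian $\K$, is \emph{false} as stated, and your sketched justification (``$N$ absorbs most of $M$'', plus Noetherian bookkeeping) never uses the one hypothesis that makes it true, namely that $\A_j$ is finitely generated \emph{as a $\K$-algebra}. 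Concretely, let $\A$ be the free non-unital $\K$-algebra on countably many generators and $I=J=\A$: both have co-rank zero, yet $\A/IJ=\A/\A^2$ is a free $\K$-module of infinite rank. Noetherianity of $\K$ alone cannot carry this step; nor does a short-exact-sequence argument apply, since the ideals involved are typically of infinite rank as $\K$-modules.

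The paper closes this gap with a specific argument whose essential inputs are missing from your proposal. Setting $J_n:=I_{1n}\cdots I_{n-1,n}$ and $L:=I_{1n}\cap\dots\cap I_{n-1,n}$, one has: (i) $\A_n/L$ is a finitely generated $\K$-module, being a subdirect product of the finitely generated modules $\A_n/I_{in}$ (Corollary~\ref{cor:mod}); (ii) by \cite[Theorem 1.5]{MR:PSS} --- and it is precisely here that finite generation of $\A_n$ as a $\K$-algebra enters --- the finite co-rank subalgebra $L$ is finitely generated as a $\K$-algebra, hence so is $L/J_n$, by some finite set $X$; (iii) since $L^{n-1}\subseteq J_n$, the algebra $L/J_n$ is nilpotent, so every element is a $\K$-linear combination of monomials over $X$ of degree at most $n-2$, and $\bigcup_{\ell=1}^{n-2}X^\ell$ generates $L/J_n$ as a $\K$-module. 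Then $\A_n/J_n$ has finite co-rank, being an extension of $\A_n/L$ by $L/J_n$. So the two ideas your proposal needs but does not supply are the external finite co-rank theorem of \cite{MR:PSS} (the $\K$-algebra analogue of Hall-type finite-index results for groups) and the nilpotency trick that upgrades algebra generation of $L/J_n$ to module generation; with those inserted at your ``obstacle'' step, the rest of your outline goes through as written.
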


\begin{proof}
For $n=2$ there is nothing to prove, so we assume $n\geq 3$.
Also the 
forward part is obvious, and we only prove 
the backward direction.

For $i\leq n$, let the ideal $I_i$ denote the kernel of the projection $\pi_i\colon C\to A_i$. Then
\[ I_i \leqsd I_{i1}\times\dots\times I_{i,i-1}\times 0 \times I_{i,i-1}\times\dots\times I_{in} \] 
 for ideals $I_{ij}$ of $\A_j$. For $1\leq i<j\leq n$ the $\K$-module $(A_i\times A_j)/\pi_{ij}(C)$ is isomorphic
 to $A_j/I_{ij}$ by Lemma~\ref{le:modcf}. Hence $A_j/I_{ij}$ has finite rank by the assumption of the theorem.
 We claim that also
\begin{equation} \label{eq:fcr}
 A_n/I_{1n}\cdots I_{n-1,n} \text{ has finite rank}.
\end{equation}
 For proving this, let $J_n:= I_{1n}\cdots I_{n-1,n}$ and  $L:= I_{1n}\cap\dots\cap I_{n-1,n}$. Then $J_n\leq L$ and 
 $A_n/L$ has finite rank as a subdirect product of modules of finite rank by Corollary~\ref{cor:mod}. 
 Since $\A_n/J_n$ is a finitely generated $\K$-algebra and $\A_n/L$ is a finitely generated
 $\K$-module, $L/J_n$ is generated by a finite set $X$ as a $\K$-algebra by~\cite[Theorem 1.5]{MR:PSS}.
 Note that the ideal product $L^{n-1}\subseteq J_n$, that is, $L/J_n$ is nilpotent. It follows that every
 element of $L/J_n$ is a $\K$-linear combination of monomials over $X$ of degree at most $n-2$.
 Hence the finite set $\bigcup_{\ell=1}^{n-2} X^\ell$ generates $L/J_n$ as $\K$-module and~\eqref{eq:fcr} follows.

 Similarly for any $j\leq n$ the ideal $J_j := \prod_{i\neq j} I_{ij}$ has finite co-rank in $\A_j$.
 Now
\[ J_1\times\dots\times J_n \leq \C \leq \A_1\times\dots\times\A_n \]
 by Theorem~\ref{thm:mult}\ref{it:mcom} (or straightforward multiplication of the ideals $I_j$).
 Since $J_1\times\dots\times J_n$ has finite co-rank in $\A_1\times\dots\times\A_n$, so has $\C$.
\end{proof}

\begin{thm} \label{thm:Kalgebra}
 Let $\K$ be a commutative Noetherian ring with $1$, $n\geq 2$, and $\C\leqsd\A_1\times\dots\times\A_n$ a subdirect
 product of $\K$-algebras $\A_1,\dots,\A_n$ such that
 $\pi_{ij}(\C)$  has finite co-rank in $\A_i\times\A_j$ for all $1\leq i<j\leq n$. Then
 $\C$ is finitely generated (finitely presented) if and only if all $\A_1,\dots,\A_n$ are finitely generated (finitely presented).
\end{thm}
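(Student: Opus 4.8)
The plan is to reduce the statement to the full direct product $P:=\A_1\times\dots\times\A_n$ and to assemble it from three results already at our disposal: Lemma~\ref{lem:Kalgebra}, the finite co-rank transfer theorem of \cite{MR:PSS} (a subalgebra of finite co-rank is finitely generated, resp.\ finitely presented, if and only if the ambient algebra is), and the behaviour of finite generation and finite presentability under finite direct products of $\K$-algebras from \cite{MR:FPD}.

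The first step is to note that in each of the four implications to be proved the factors $\A_1,\dots,\A_n$ are automatically finitely generated: for the two backward implications this is assumed, while for the two forward implications $\C$ is finitely generated (being even finitely presented in the relevant case), so each $\A_i=\pi_i(\C)$, as a homomorphic image of $\C$, is finitely generated. In particular the forward half of the finite generation equivalence is already done. Granting that all $\A_i$ are finitely generated, $P$ is finitely generated, and the hypothesis that every $\pi_{ij}(\C)$ has finite co-rank in $\A_i\times\A_j$ lets me apply Lemma~\ref{lem:Kalgebra} to conclude that $\C$ has finite co-rank in $P$. The transfer theorem of \cite{MR:PSS} then gives, simultaneously, that $\C$ is finitely generated (resp.\ finitely presented) if and only if $P$ is; and the direct product preservation from \cite{MR:FPD} gives that $P$ is finitely generated (resp.\ finitely presented) if and only if each $\A_i$ is. Chaining these equivalences proves both parts of the theorem.

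The only substantive point is the finite presentation direction of the direct product step, namely that a finite direct product of finitely presented $\K$-algebras over the Noetherian ring $\K$ is again finitely presented; this is exactly where the Noetherian hypothesis on $\K$ is used beyond Lemma~\ref{lem:Kalgebra}, and it is a feature special to $\K$-algebras (and modules, rings) which fails for varieties such as loops or lattices. The converse passage, deducing that each factor $\A_i$ is finitely presented once $P$ is, requires no such input: each $\A_i$ is a retract of $P$ via $a\mapsto(0,\dots,a,\dots,0)$ and $\pi_i$, and a retract of a finitely presented algebra is finitely presented.
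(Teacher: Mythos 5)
Your proposal is correct and follows essentially the same route as the paper's proof: deduce finite generation of all $\A_i$, apply Lemma~\ref{lem:Kalgebra} to get that $\C$ has finite co-rank in $\A_1\times\dots\times\A_n$, then chain the finite co-rank transfer theorem of \cite{MR:PSS} with the direct product results of \cite{MR:FPD}. The only cosmetic differences are that you make explicit the finite generation hypothesis needed to invoke Lemma~\ref{lem:Kalgebra} (which the paper leaves implicit) and replace the paper's citation of \cite[Theorem 3.2]{MR:FPD} for passing finite presentability from the product to its factors by a correct inline retract argument; note only that the Noetherian hypothesis is also used in the \cite{MR:PSS} transfer step, not solely in the direct product step as your closing remark suggests.
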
  

\begin{proof}
Note that the conditions imply that $\C$ has finite co-rank in $\A_1\times\dots\times\A_n$ by Lemma \ref{lem:Kalgebra}.

($\Rightarrow$)
For finite generation this is obvious.
For finite presentability, we have that $\A_1\times\dots\times\A_n$ is finitely presented by
\cite[Theorem 1.1]{MR:PSS}. Then all $\A_i$ are finitely presented by \cite[Theorem 3.2]{MR:FPD}.

($\Leftarrow$)
 If $\A_1,\dots,\A_n$ are finitely generated (resp. finitely presented), then so is their direct product
 $\A_1\times\dots\times\A_n$ by~\cite[Corollary 2.4 (Corollary 3.7)]{MR:FPD}. It then follows that $\C$ is finitely
 generated (resp. finitely presented) by~\cite[Theorem 1.5 (Theorem 1.1)]{MR:PSS}.
\end{proof}  

Attempting to go beyond finite co-rank, one may ask whether the analogue of Theorem \ref{thm:3groups} holds for
$\K$-algebras. As mentioned earlier, for a $\K$-algebra $\R$ to be nilpotent means that
the ideal product
$\R^n=0$ for some $n>0$.
Furthermore if $\R$ is a finitely generated nilpotent $\K$-algebra, then it is in fact finitely generated as a
$\K$-module.
Combining this with Lemma \ref{le:modcf} implies that if $\C\leqsd \A_1\times \A_2$ with a nilpotent quotient, then $\C$ has a finite co-rank in $\A_1\times \A_2$.
Thus the analogue of Theorem \ref{thm:3groups}, and its generalization to any number of factors, both hold for $\K$-algebras as an immediate consequence of Theorem \ref{thm:Kalgebra}.

By way of contrast, Example \ref{exa:pairsfg1} carries over to rings in a straightforward fashion to show that just finite generation of projections on pairs is not sufficient for finite generation of the subdirect product.

\begin{exa}
\label{exa:3rings}
Let $\F:=\Z\langle x,y,z\rangle$ denote the free ring in three non-commuting variables with 1, and let
\[
\C_0:=\bigl\langle (x,x,x),(y,y,y),(z,z,z),(0,z,-z),(0,0,3z)\bigr\rangle \leqsd \F\times\F\times\F.
\]
Let $\ZZ$ be the ideal of $\F$ generated by $z$, and let
\[
B:=\bigl\{ (a,b)\in \F\times F\setsuchthat a-b\in Z\bigr\}=\bigl\langle (x,x),(y,y),(z,z),(0,z)\bigr\rangle .
\]
 It is clear that
 $\pi_{ij}(\C_0) =  \B $ for all $1\leq i<j\leq 3$.
 For $i=1,2,3$, let $N_i$ be the kernel of the projection of $\C_0$ on the $i$-th component. Then
\begin{align*}
 N_1 & \leqsd 0\times \ZZ \times \ZZ, \\
 N_2 & \leqsd \ZZ\times 0 \times \ZZ, \\
 N_3 & \leqsd \ZZ\times \ZZ \times 0.
\end{align*}
 Computing the products $N_iN_j$ yields $\ZZ^2\times \ZZ^2\times \ZZ^2 \leq \C_0$.

 Let $\M$ be the ideal of $\F$ generated by
 $\{ xy^iz\setsuchthat i=1,2,\dots\}\cup \ZZ^2$, and let
 \[ \C := \C_0+0\times 0\times \M\leqsd \F\times\F\times\F. \]
We claim that $\C$ is not finitely generated even though all the projections of $\C$ onto $2$ factors still
equal $\B$ (since $\M\leq \ZZ$) and are finitely generated. 

Suppose, aiming for contradiction, that $\C$ is finitely generated.
Consider $\C$ as
a subdirect product of $\F$ and $\B$.
A similar (and easier) argument to that in Example \ref{exa:pairsfg1} shows that
the
factor kernel $\L:=\pi_\B(\ker\pi_\F)$ is generated by $\{(z,-z),(0,3z)\}\cup (0\times M)$ as an ideal of $\B$.
Since $\F$ is free, and hence finitely presented, and since $\C$ is assumed to be finitely generated,  Proposition~\ref{pro:lBfg}\ref{it:lBfg1} implies that $\L$ must finitely  generated as an ideal of $\B$.

Consider the quotient $\overline{\ZZ}:=\ZZ/\ZZ^2$, and regard it as a (free) $\G$-bimodule, where
$\G:=\langle x,y\rangle\leq \F$.
Since $\L$ is a finitely generated ideal of $\B$ and since $\ZZ^2\times\ZZ^2\leq\L$, it follows that $\overline{\L}$
 is a finitely generated bimodule of $\overline{\ZZ}\times\overline{\ZZ}$.
Hence there exists $k\in \N$ such that for every $r\in\N$ we have
\[
(0,xy^rz)=a(z,-z)a^\prime+b(0,3z)b^\prime +\sum_{i=1}^{k} c_i(0,xy^iz)c_i^\prime
\]
for some $a,a^\prime,b,b^\prime,c_i,c_i^\prime\in\G$.
Noting that $a=0$ or $a^\prime=0$ on account of the first components, and then reducing modulo $3$, we obtain
\[
xy^rz\equiv\sum_{i=1}^{k}c_i(0,xy^iz)c_i^\prime.
\]
This would imply that the ideal generated by 
$\{ xy^iz\setsuchthat i=1,2,\dots\}\cup\ZZ^2$
in the ring $\Z_3\langle x,y,z\rangle$ is finitely generated, which is clearly not the case.
\end{exa}

\section{Beyond congruence permutable: monoids and lattices}
\label{sec:smgplat}

In this section we present some examples of subdirect products of monoids and lattices which serve to show that our main results do  not readily generalise beyond congruence permutable or modular varieties, as appropriate.

We begin with monoids, and give an example of a fiber product of two copies of the free monoid of rank 1 over a finite quotient that is not finitely generated.

\begin{exa}
\label{exmon1}
Let $\F_{1}$ be the free monoid on a single generator $a$ with identity $1$, 
and let $\T$ be the two element monoid $\{0,1\}$ under the standard multiplication.
Define a homomorphism $\phi:\F_{1}\rightarrow\{0,1\}$ by $a\mapsto 0$.
The fiber product $\S:=\ker\phi$ is not finitely generated.
Indeed, clearly $S=\bigl\{(1,1)\bigr\}\cup \bigl\{ (a^i,a^j)\setsuchthat i,j>0\bigr\}$.
Each of the elements $(a^i,a)$ ($i\in\N$) is indecomposable, i.e., not equal 
to a product of two other non-identity elements of $\S$, and hence must belong to every generating set of $\S$.
\end{exa}

This example indicates that it is hard for a fiber product of two free monoids over a non-trivial monoid to be finitely generated. We ask:

\begin{prob}
\label{monsdfg}
 Find necessary and sufficient conditions for a fiber product of finitely generated monoids over a finite monoid to 
 be finitely generated. More specifically, is
 it decidable whether a fiber product of two finitely generated free monoids over a finite quotient is finitely
 generated?
\end{prob}

Our next example shows that,
for monoids, finite presentability of a subdirect product $\C\leqsd \A\times \B$
and of the associated quotient are not sufficient to ensure finite generation of the factor kernels $\lambda_\A$ and $\lambda_\B$.
It therefore shows that Proposition \ref{pro:lBfg}\ref{it:lBfg2} does not generalize to monoids.

\begin{exa}
\label{ex:monlambda}
Let $\F$ be the free monoid on two generators $\{x,y\}$, and let
$\sigma,\tau$ be its congruences generated by
\begin{align*}
& \bigl\{ (xy^ix,xyx),(x^2y^2,x^2y),(y^2x^2,yx^2)\setsuchthat i\in\N\bigr\}, \\
& \bigl\{ (yx^iy,yxy),(y^2x^2,y^2x),(x^2y^2,xy^2)\setsuchthat i\in\N\bigr\},
\end{align*}
respectively.
We claim that the following hold:
\begin{enumerate}[label=(\alph*), widest=(iii), leftmargin=10mm,topsep=0mm,partopsep=0mm,itemsep=1mm]
\item
\label{it:monF1}
Neither $\sigma$ nor $\tau$ are finitely generated.
\item
\label{it:monF2}
$\sigma\cap\tau=0_F$, so that $\F\leqsd \F/\sigma\times\F/\tau$.
\item
\label{it:monF3}
$\sigma\vee\tau$ is finitely generated, so that the corresponding quotient is finitely presented, and hence the factor kernels are finitely generated.
\end{enumerate}

For~\ref{it:monF1}, $\sigma$ is not finitely generated because the only left- or right hand side of a generator that is a subword of $xy^ix$ is $xy^i x$ itself, and the proof for $\tau$ is dual.
Statement~\ref{it:monF2} follows by observing that applications of generators of $\sigma$ do not change powers of $x$, while those of $\tau$ do not change powers of $y$.
Finally, we prove~\ref{it:monF3} by showing that $\sigma\join \tau$ is generated by
\[
\{ (xy^2x,xyx),(yx^2y,yxy),(x^2y^2,x^2y),(y^2x^2,yx^2),
(y^2x^2,y^2x),(x^2y^2,xy^2)\}.
\]
Denoting by $\rho$ the congruence generated by this set, and supposing inductively that $(xy^ix,xyx),(yx^iy,yxy)\in\rho$ for some $i\geq 2$, we have
\begin{align*}
xy^{i+1}x&=xy^{i-1}y^2x\equiv_\rho xy^{i-1} y^2x^2\equiv_\rho
xy^{i-1}yx^2=xy^{i-2}y^2x^2\\
&\equiv_\rho xy^{i-2}y^2x=xy^ix\equiv_\rho
xyx,
\end{align*}
and, dually, $(yx^{i+1}y,yxy)\in\rho$.
So all generators of $\sigma$ and $\tau$ belong to $\rho$, and the assertion follows.
\end{exa}

Our final monoid example shows that in the case of several factors surjectivity on pairs is not sufficient to ensure finite generation.

\begin{exa}
\label{exmon3}
Let $\N_0=\{0,1,2,\dots\}$ be the free monogenic (cyclic) monoid.
Let $\S\leqsd \N_0\times\N_0\times\N_0$ be given by the generators $(1,0,3)$ and all its permutations, as well as $(0,2,n)$ for $n=7,8,\dots$.
Clearly, $\S$ is surjective on pairs, due to the first group of generators.
We claim that all generators in the second group are indecomposable.
Suppose not, and write such a generator as a sum of generators:
\[
(0,2,n)=g_1+g_2+\dots+g_k
\]
where $k>1$. No $g_i$ can equal $(0,2,m)$, because then the remaining generators would need to sum to $(0,0,n-m)\not\in S$.
Every generator $(x,y,z)$ of the first type, except for $(0,1,3)$, has $x>0$ or $y>2$.
Hence no $g_i$ equals any of these.
Therefore we must have all $g_i=(0,1,3)$, which implies $(0,2,n)=(0,k,3k)$,
and hence $k=2$ and $n=6<7$, a contradiction.
\end{exa}

We now turn to lattices.
For several factors, the Baker--Pixley Theorem \cite[Lemma IV.10.4]{BS:CUA} asserts that a subdirect product of
  lattices that is surjective on pairs must in fact be the full direct product. Further such a (sub)direct product
  of finitely generated lattices is finitely generated~\cite[Corollary 2.6]{MR:FPD}.
 However a subdirect product of finitely generated lattices that is virtually surjective on pairs is not
 necessarily finitely generated. This can be observed from the following
 example of a fiber product of two finitely generated lattices over a finite quotient that is not finitely generated.

\begin{exa} \label{exa:lat}
 The \emph{diamond} $M_3$ is the lattice with three atoms $\{a,b,c\}$ any two of which join to $1$;
 see Figure \ref{fig:M3}.
 Let $F$ be the free lattice over three generators $\{x,y,z\}$, and define an epimorphism
\[ \phi\colon F\to M_3\ \text{ by }\ x \mapsto a, y \mapsto b, z\mapsto c. \]
 Then $\C := \ker\phi$ is a fiber product of $F$ with itself over the the quotient
 $M_3$. To prove that $\C$ is not finitely generated, we first establish some infinite ascending chain
 of elements $x_n$ in $\phi^{-1}(a)$ and show that no finite subset of $\C$ can generate all pairs $(x,x_n)$ for
 $n\in\N$.

 \begin{figure}
\begin{center}
\begin{tikzpicture}
\draw[fill] (4,0) circle [radius=0.8mm] node [below] {$0$};
\draw[fill] (4,2) circle [radius=0.8mm] node [above] {$1$};
\draw[fill] (4,1) circle [radius=0.8mm] node [right] {$b$};
\draw[fill] (2.5,1) circle [radius=0.8mm] node [left] {$a$};
\draw[fill] (5.5,1) circle [radius=0.8mm] node [right] {$c$};
   \draw (4,0)--(4,2);
   \draw (4,0)--(2.5,1)--(4,2);
   \draw (4,0)--(5.5,1)--(4,2);
\end{tikzpicture}
\end{center}
\caption{\textsf{The diamond $M_3$.}}
\label{fig:M3}
\end{figure}

 For $n\in\N$ define
 \[ x_0 := x, \quad y_0 := y,\quad  z_0 := z, \]
 \[ x_{n+1} := x\vee(y_n\wedge z_n), \quad y_{n+1} := y\vee(x_n\wedge z_n), \quad z_{n+1} := z\vee(x_n\wedge y_n). \]
 We claim that for all $n,m\in\N$
\begin{enumerate}[label=\textup{(\alph*)}, widest=(b), leftmargin=10mm]
\item \label{it:hxa}
 $\phi(x_n) = a$, 
\item \label{it:inc}
 $x_n$ is incomparable with $y_m$ and $z_m$, 
\item \label{it:xac}
   $x_n < x_{n+1}$, 
   \item \label{it:xy}
  $x_{n+1}\wedge y_{n+1} \not\leq x_m$ if $m\leq n$, 
\item \label{it:pxn}
 for each $p\in\phi^{-1}(\{0,a\})$ there exists $k\in\N$ such that $p\leq x_k$,  
\end{enumerate}
and analogous statements for the sequences $y_n,z_n$ and $b,c$, respectively.

 \ref{it:hxa} is immediate by induction on $n$.

 \ref{it:inc} Since $a,b,c$ are pairwise incomparable in $M_3$,
 so are $x_n\in\phi^{-1}(a)$ and $y_m\in\phi^{-1}(b), z_m\in\phi^{-1}(c)$. 

 \ref{it:xac},\ref{it:xy} are proved together by induction on $n$. For $n=0$
\[ x_0 = x < x\vee(y\wedge z) = x_1 \]
 is well known~\cite[Figure 123]{Gr:LTF}.
 Further, if $x_1\wedge y_1 \leq x_0$, then either $x_1\leq x$ or $y_1\leq x$ since generators in a free lattice
 are meet prime~\cite[Corollary 1.5]{FJN:FL}. The former contradicts $x<x_1$, the latter~\ref{it:inc}.
 Hence $x_1\wedge y_1 \not\leq x_0$ and the base case is complete.

 For the induction step for~\ref{it:xac} $x_n \leq x_{n+1}$ is straightforward. Seeking a contradiction
 suppose $x_{n+1} \leq x_n$. Then in particular $y_n\wedge z_n\leq x_n = x\vee (y_{n-1}\wedge z_{n-1})$.
 Whitman's condition for comparing meets and joins in free lattices~\cite[Theorem 1.8]{FJN:FL}
 implies that one of the following holds: $y_n\wedge z_n\leq x$, $y_n\wedge z_n\leq y_{n-1}\wedge z_{n-1}$, $y_n\leq x_n$
 or $z_n\leq x_n$.
 The first option yields $y_n\leq x$ or $z_n\leq x$ contradicting~\ref{it:inc}, the second yields
 $y_n\wedge z_n\leq y_{n-1}$ contradicting the induction hypothesis for~\ref{it:xy}, the final two
 contradict~\ref{it:inc}. Thus $x_{n+1} \not\leq x_n$ and the induction step for~\ref{it:xac} is proved.

 For the induction step for~\ref{it:xy} we use a secondary induction on $m$. The base case
 $x_{n+1}\wedge y_{n+1} \not\leq x_0$ holds because otherwise $x_{n+1}\leq x$ (contradicting $x_{n+1}\not\leq x_n$)
 or $y_{n+1}\leq x$ (contradicting~\ref{it:inc}).
 For the induction step, seeking a contradiction we suppose $x_{n+1}\wedge y_{n+1}\leq x_{m+1} = x\vee (y_m\wedge z_m)$
 for $m<n$. By Whitman's condition we get
\begin{itemize}
\item $x_{n+1}\wedge y_{n+1}\leq x$, which contradicts the base case,
\item $x_{n+1}\wedge y_{n+1}\leq y_m\wedge z_m$, which contradicts the induction hypothesis,
\item $x_{n+1}\leq x_{m+1}$, which contradicts $x_{n+1}\not\leq x_n$, or
\item $y_{n+1}\leq x_{m+1}$, which contradicts \ref{it:inc}.
\end{itemize}
 This completes the induction on $m$ and on $n$. Thus~\ref{it:xac} and~\ref{it:xy} are proved.
 
 \ref{it:pxn} is proved by induction on the complexity of $p$. If $p\in\phi^{-1}(\{0,a\})$ is one of the
 generators $x,y,z$, then $p=x\leq x_0$ by~\ref{it:hxa}.

 Next assume $p = p_1\vee p_2$ for $p_1,p_2\in F$. Then $p_1,p_2$ are in $\phi^{-1}(\{0,a\})$. By the induction
 hypothesis and~\ref{it:xac} we have $k\in\N$ such that $p_1,p_2$ are both below $x_k$. So $p\leq x_k$.

 Finally assume $p = p_1\wedge p_2$ for $p_1,p_2\in F$. If $\phi(p_1) \leq a$ or $\phi(p_2) \leq a$, the assertion
 follows from the induction hypothesis. Else we may assume without loss of generality that $\phi(p_1) = b$ and
 $\phi(p_2) = c$.  By the induction hypothesis and~\ref{it:xac} we have $k\in\N$ such that $p_1\leq y_k$ and
 $p_2\leq z_k$. Then $p \leq y_k\wedge z_k \leq x\vee(y_k\wedge z_k) = x_{k+1}$. This completes the proof of~\ref{it:pxn}.

 Let $X$ be a finite subset of $C$. We show that $\langle X\rangle \neq C$.
 By~\ref{it:pxn} and~\ref{it:xac} there exists $k\in\N$ such that for every $(p,q)\in X$ with $p\leq x$
 we have $q\leq x_k$. We claim that
\begin{equation} \label{eq:qxn}
 \forall (p,q)\in\langle X\rangle\colon p\leq x \Rightarrow q\leq x_k.
\end{equation}
 The proof is by induction on the complexity of $(p,q)$ as a term over $X$. Let $(p,q)\in\langle X\rangle$ and
 $p\leq x$.
 For $(p,q)\in X$ the statement follows from the definition of $k$.

 Next assume $(p,q) = (p_1,q_1)\vee(p_2,q_2)$ for $(p_i,q_i)\in\langle X\rangle$. Then $p_1, p_2\leq x$
 and $q_1,q_2\leq x_k$ by the induction hypothesis. Thus  $q=q_1\vee q_2\leq x_k$.

 Finally assume $(p,q) = (p_1,q_1)\wedge(p_2,q_2)$ for $(p_i,q_i)\in\langle X\rangle$. Since generators in a free
 lattice are meet prime, we may assume $p_1\leq x$ without loss of generality. Then $q_1\leq x_k$ by induction
 assumption and consequently $q=q_1\wedge q_2\leq x_k$. This completes the proof of~\eqref{eq:qxn}.

 Together with~\ref{it:hxa} and~\ref{it:xac} it now follows that $(x,x_{k+1}) \in C \setminus\langle X\rangle$.
 Thus $C$ is not finitely generated.
\end{exa}
 
 In analogy to Problem~\ref{monsdfg} we ask:

\begin{prob}
 For $\phi:\F_{r}\rightarrow\D$ a homomorphism from the free lattice $\F_r$ of rank $r$ onto a finite lattice $\D$,
 find necessary and sufficient conditions for the fiber product $\ker\phi \leqsd \F_{r}\times \F_{r}$ 
to be finitely generated.
\end{prob}


\def\cprime{$'$}

\end{document}